\definecolor{red}{rgb}{0.7,0.15,0.15}
\definecolor{green}{rgb}{0,0.5,0}
\definecolor{blue}{rgb}{0,0,0.7}
\makeatletter \@addtoreset{equation}{section}
\newcommand{\smallfont}[1]{\text{\fontsize{4}{4}\selectfont$#1$}}
\def\beq{\begin{eqnarray}}
\def\eeq{\end{eqnarray}}
\def\be*{\begin{eqnarray*}}
\def\ee*{\end{eqnarray*}}
\newtheorem{Theorem}{Theorem}[section]
\newtheorem{Lemma}[Theorem]{Lemma}
\newtheorem{Proposition}[Theorem]{Proposition}
\newtheorem{Corollary}[Theorem]{Corollary}
\newtheorem{Definition}[Theorem]{Definition}
\newtheorem{Remark}[Theorem]{Remark}
\def \E{\mathbb{E}}
\def \F{\mathbb{F}}
\def \L{\mathbb{L}}
\def \N{\mathbb{N}}
\def \P{\mathbb{P}}
\def \R{\mathbb{R}}
\def\Ac{{\cal A}}
\def\Fc{{\cal F}}
\def\Mc{{\cal M}}
\def \eps {\varepsilon}
\def \no {\noindent}
\def \1{\mathds{1}}
\begin{document}

\title{Randomness and early termination: what makes a game exciting?}


\author{Gaoyue {\sc Guo}\footnote{Université Paris-Saclay CentraleSupélec, Laboratoire MICS and CNRS FR-3487, France,  gaoyue.guo@centralesupelec.fr.} \and 
Sam D.\ {\sc Howison}\footnote{University of Oxford, Mathematical Institute, United Kingdom, howison@maths.ox.ac.uk.}
\and
Dylan {\sc Possama{\"i}}\footnote{ETH Z\"urich, Department of Mathematics, Switzerland, dylan.possamai@math.ethz.ch.} \and Christoph {\sc Reisinger}\footnote{University of Oxford, Mathematical Institute, United Kingdom, christoph.reisinger@maths.ox.ac.uk.}}



\maketitle

\begin{abstract}
In this paper we revisit an open problem posed by \citeauthor*{aldous_website} on the max-entropy win-probability martingale:
given two players of equal strength, such that the win-probability is a martingale diffusion, which of these processes has maximum entropy and hence gives the most excitement for the spectators? We study a terminal-boundary value problem for the nonlinear parabolic PDE $2\partial_t e(t,x)= \log(-\partial_{xx}e(t,x))$ derived by \citeauthor*{aldous_website} and prove its wellposedness and regularity of its solution by combining PDE analysis and probabilistic tools, in particular the
reformulation as a stochastic control problem with restricted control set, which allows us to deduce
strict ellipticity. 
We establish key qualitative properties of the solution including concavity, monotonicity, convergence to a steady state for long remaining time and the asymptotic behaviour shortly before the terminal time.
Moreover, we construct convergent numerical approximations.
The analytical and numerical results allow us to highlight the behaviour of the win-probability process
in the present case where the match may end early, in contrast to recent work by \citeauthor*{backhoff2023most} where the match always runs the full length.
\medskip

\no {\bf Keywords: }log diffusion PDE, most uncertain match, stochastic control, classical solution, weak solution, viscosity solution.
\end{abstract}

\section{Introduction}\label{sec:intro}

This paper is motivated by one of \citeauthor*{aldous_website}'s open problems about the characterisation of  the `most random martingale'. To be more precise, given a match of length $T>0$, for two players---or two teams---of equal level, denote by $X_t$ the winning probability of one player (or one team) at time $t\in[0,T]$. The win-probability $X\coloneqq (X_t)_{t\in [0,T]}$ is thus a martingale starting at $X_0=1/2$ and ending in $X_T \in \{0,1\}$. The question then becomes: which choice of $X$ leads to the most excitement to spectators, in the sense that the outcome remains uncertain until late? Taking relative entropy as a measure for `random', 
a heuristic derivation by \citeauthor*{aldous_notes} \cite{aldous_notes} leads to the following PDE for $e: [0,T]\times [0,1]\longrightarrow\R$, with $e(t,x)$ being the entropy given $X_t = x$
\begin{align}
\label{log-pde}
2\partial_t e(t,x)&= \log\big(-\partial_{xx}e(t,x)\big), \; (t,x) \in (0,T) \times (0,1)\coloneqq \Omega_T, \\
\label{tc}
e(T,x) &= 0,\; x \in (0,1), \\
\label{bc}
e(t,0) &= e(t,1) = 0, \;  t \in (0,T).
\end{align}
Specifically, at the end of \cite{aldous_website}, \citeauthor*{aldous_website} formulates the following open questions about \eqref{log-pde}--\eqref{tc}--\eqref{bc}:
\begin{enumerate}
\item[Q1:]
find an explicit solution of the PDE above, or at least prove existence and uniqueness of a solution; 
\item[Q2:]
find some of its qualitative properties;
\item[Q3:]
in particular, what is the distribution of $X_{T/2}$? 
\end{enumerate}
From our analysis, we can now answer these questions as follows:
\begin{enumerate}
\item[A1:]
in \Cref{thm:main} below, we ascertain the existence and uniqueness of a classical solution. Due to the boundary conditions, a separation \emph{ansatz} as in  \cite{backhoff2023most}
fails and we were unable to find a closed form solution, however,
we are able to provide an asymptotic formula for $t\ll T$  and $t\longrightarrow T$;
\item[A2:]
we were able to establish natural qualitative properties of the solution, such as monotonicity, concavity, and symmetry, summarised also in \Cref{thm:main}
and illustrated by a numerical solution in \Cref{fig:3d-con};
\item[A3:]
in absence of an analytical expression, we give a numerical approximation of the density at different times, including at $T/2$, in \Cref{fig:dens}, see also the discussion thereafter.
\end{enumerate}

A natural and unified approach to \citeauthor*{aldous_website}'s `most random martingale' \cite{aldous_website} comes from martingale optimal transport. Namely, fix a filtered probability space $(\Omega,\Fc,\F=(\Fc_t)_{t\in[0,T]},\P)$, and consider the optimisation problem
\begin{equation}\label{mot}
\sup_{X\in\Mc} \E^\P[F(X)],
\end{equation}
where $\Mc$ is some suitable set of $\R$-valued, $(\F,\P)$-martingales $X$ such that $\P[X_0=1/2]=1=\P[X_T\in \{0,1\}]$, and the choice of $F:\Mc\longrightarrow \R$ encapsulates some criterion that quantifies the game's attractiveness. In the recent paper \cite{backhoff2023most}, \citeauthor*{backhoff2023most} take $\Mc$ to be the collection of what they call `win martingales', that is to say martingales on $[0,T]$, which terminate in $\{0,1\}$, and whose quadratic variation is absolutely continuous with respect to Lebesgue measure. Up to enlarging the underlying probability space, this means that for any $X\in\Mc$, there exists an $\R$-valued volatility process $\sigma$ with 
\begin{equation}\label{eq:firstvol}
X_t =\frac 1 2 + \int_0^t\sigma_s   \, \mathrm{d} W_s,\; t \in [0,T].
\end{equation}
The criterion $F=F_0$\footnote{\citeauthor*{backhoff2023most} consider \[
G_0(X)\coloneqq F_0(X)-\scaleobj{.8}{\int^{T}_{0}} \sigma_t^2/2 \, \mathrm{d} t,\] as the objective function. Nevertheless, Itô's formula applied to $X_t^2$ yields $\E^\P\big[\int_0^T \sigma_t^2  \, \mathrm{d} t\big]=\E^\P[X_T^2-X_0^2]=1/4$ and thus $\E^\P[G_0(X)]=\E^\P[F_0(X)]-1/8$ for all $X\in \Mc$. Therefore, we do not distinguish $F_0$ and $G_0$ without any loss of generality.} is then defined specifically through the \emph{specific relative entropy} as
\begin{equation}
\label{obj_F_BB}
F_0(X)\coloneqq \frac 1 2 \int_0^T \big(\log\big(\sigma_t^2\big)  +1\big)  \, \mathrm{d} t,\; X\in\Mc.
\end{equation}
It is shown in \cite{backhoff2023most} that the optimiser for the above problem actually corresponds to a \emph{diffusion martingale}, that is to say that the optimal process $\sigma$ can by written in feedback form as $\bar\sigma(\cdot,X_\cdot)$ through the map $[0,T]\times\R\ni (t,x)\longmapsto \bar\sigma(t,x)\coloneqq \sin(\pi x)/(\pi\sqrt{T-t})\in\R$. Moreover, the associated entropy function is given by
\begin{equation}
\label{eqn:bare}
\bar e(t,x)\coloneqq (T-t)\bigg(\log\bigg(\frac{\sin(\pi x)}{\pi\sqrt{T-t}}\bigg)+\frac12\bigg),\; (t,x)\in[0,T]\times(0,1),
\end{equation}
which can be found, \emph{e.g.}, from the relationship $\partial_{xx} \bar e(t,x)=-1/\bar\sigma(t,x)^2$, satisfies \eqref{log-pde}--\eqref{tc}, but not \eqref{bc}, as in the original problem stated in \cite{aldous_notes}. In fact, it explodes at $x\in\{0,1\}$, at least for $t\in[0,T)$. We show later, in \Cref{prop:elliptic} and \Cref{lem:comp}, that the solution $e$ to \eqref{log-pde}--\eqref{tc}--\eqref{bc} admits a probabilistic representation that has a similar form to \eqref{mot},
where the essential difference arises in the choice of $F$, \emph{i.e.}
\begin{equation}
\label{obj_F}
F(X)=\frac 1 2 \int_0^{\min(T,\tau)} \big(\log\big(\sigma(t,X_t)^2\big)  +1\big)  \, \mathrm{d} t,
\end{equation}
where $\tau\coloneqq \inf \{t\in [0,T]:  X_t\notin (0,1)\}$ is the first exit time of $X$ from $(0,1)$. Here, the match under our consideration may end strictly prior to $T$, which for instance would be the case in a boxing match.
As a consequence, the entropy function satisfies the absorbing boundary conditions \eqref{bc}, whereas $\bar\sigma(t,x)$ from \cite{backhoff2023most} vanishes at the boundaries and prevents the process from
hitting the boundary prior to the end time. Viewed differently, \eqref{obj_F_BB} imposes a penalty of $-\infty$ if the process gets absorbed at the boundary
prior to time $T$.

\medskip

\Cref{fig:dens} highlights, for $T=1$, the difference between optimal matches which may or may not terminate early. Shown left is the probability density $q(t,\cdot)$ of $X_t$ for $\sigma^\star$ found by \cite{backhoff2023most}, for three different times $t$; shown right is the sub-probability density
of $X_t$ when it does not hit the boundary, under the optimal volatility in \citeauthor*{aldous_website}'s original model, found by computations explained in \Cref{sec:numerics}. The density is in both cases approximated by a finite-difference solution of the corresponding Kolmogorov forward equation.
\begin{figure}[ht!]
\hspace{-0.2 cm}
\includegraphics[width=0.50\columnwidth]{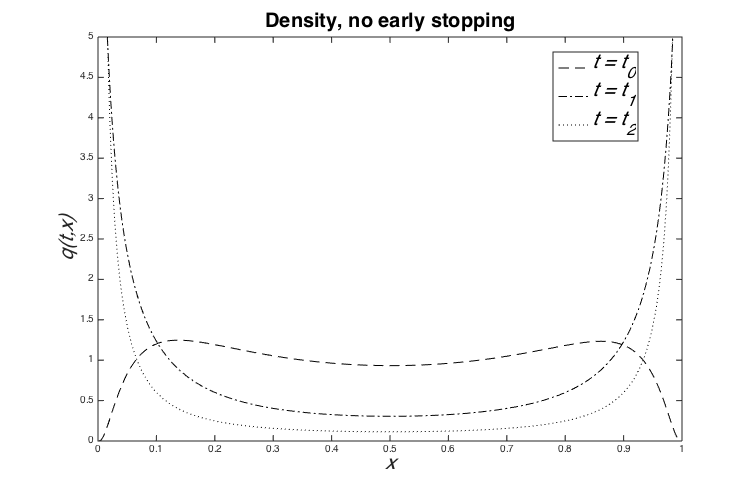}
\hspace{-0.5 cm}
\includegraphics[width=0.50\columnwidth]{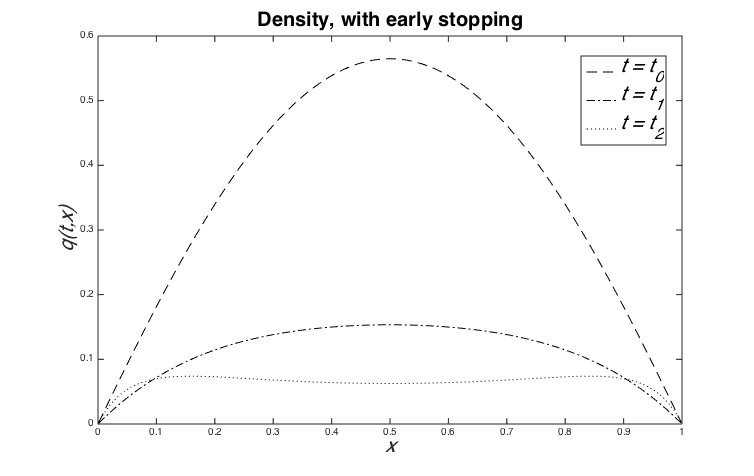}
\caption{\small The (sub-)probability density $q(t,\cdot)$ of $X_t$,  for $t_0=0.5$, $t_1=0.9$, $t_2=0.99$, under the optimal volatility from \cite{backhoff2023most} (\emph{i.e.},  the match never finishes early, left) and ours (match may finish early, right).}
\label{fig:dens}
\end{figure}
First, in \citeauthor*{aldous_website}'s model, there is a significant probability that the match started at $0$ terminates before $1$, \emph{i.e.}, $X$ hits a boundary before time $1$, namely with $63$\% chance the match is over before $t_0=0.5$, with $88$\% before $t_1=0.9$, and with $93$\% before $t_2=0.99$ (these being found by computing $1$ minus the integral of $q(t,\cdot)$ over $(0,1)$).
Until close to the end, provided the match has not ended, the highest density is found in the centre. Under the model in \cite{backhoff2023most}, where the match always runs until time $1$, \emph{i.e.}, the process is forced to stay in $(0,1)$, probability mass is fairly evenly distributed at time $t_0=0.5$, and then accumulates close to the boundaries, terminating in two atomic masses of weight $0.5$ at $0$ and $1$ at time $1$.

\begin{Remark}
One could argue that the {\rm`}most random match{\rm'} is not necessarily equivalent to the {\rm`}most exciting match{\rm'}, where the former emphasises the outcome uncertainty and the latter could express, say, the excitement resulting from sudden changes within the game, \emph{e.g.},  the more oscillating the martingale $X$, the more exciting the match.\footnote{A typical exciting match is the 2008/2009 Champions League semi-final between Chelsea and Liverpool. First leg 3:1; second leg, aggregate: 3:2, 3:3, 4:3, 5:3, 6:3, 6:4, 6:5, 7:5.}
To address this issue, one might consider the martingale optimal transport problem \eqref{mot} with 
\[
F(X)=\max\big\{U_{\eps}(X), D_{\eps}(X)\big\} \; \mbox{\rm or} \; F(X)=\mathbf{1}_{\{ \max(U_{\smallfont\eps}(X), D_{\smallfont\eps}(X))\ge n\}},
\]
 where $U_{\eps}(X)$ $($resp. $D_{\eps}(X))$ denotes the number of up-crossings $($resp. down-crossings$)$ of $X$ on $[\eps, 1-\eps]$ for some $\eps \in (0,1/2)$, and $n\in\N^\star$ is some psychological threshold of the spectators,  
  see \emph{e.g.}, {\rm\citeauthor*{leike2014indefinitely} \cite{leike2014indefinitely}}. 
\end{Remark}

In what follows, we adopt \citeauthor*{aldous_website}'s criterion 
(see the heuristic derivation in \cite{aldous_notes}) and focus on \eqref{log-pde}--\eqref{tc}--\eqref{bc}. Differentiating formally \eqref{log-pde} twice with respect to $x$ and setting $p(t,x)\coloneqq -\partial_{xx}e(1-t,x)$, we obtain
\begin{align}
\label{log-pde2}
2\partial_tp(t,x) &=\partial_{xx} \big(\log(p(t,x)\big),\; (t,x) \in \Omega_T, \\
\label{tc2}
p(0,x) &= 0, \;  x \in (0,1), \\
\label{bc2}
p(t,0) &= p(t,1) = 1,\;   t \in (0,T),
\end{align}
where a smooth-fit, which we will justify rigorously later, yields $p(t,0)=-\partial_{xx}e(1-t,0)=\exp(\partial_te(1-t,0))=1$ (resp. $p(t,1)=-\partial_{xx}e(1-t,1)=\exp(\partial_te(1-t,1))=1$). \Cref{log-pde2}, known as the \emph{logarithmic diffusion equation}, has mathematical significance as it arises in the study of Ricci flow (see, \emph{e.g.}, \citeauthor*{hamilton1988ricci} \cite{hamilton1988ricci}, \citeauthor*{wu1993ricci} \cite{wu1993ricci}), and physical significance in connection with the dynamics of thin liquid films (see, \emph{e.g.}, \citeauthor*{bertozzi1994singularities} \cite{bertozzi1994singularities}, \citeauthor*{burelbach1988nonlinear} \cite{burelbach1988nonlinear}, \citeauthor*{vazquez1992nonexistence} \cite{vazquez1992nonexistence}) and as a model for the limiting density in the kinetics of two gases moving against each other and obeying the Boltzmann equation (see, \emph{e.g.}, \citeauthor*{GS2007} \cite{GS2007}, \citeauthor*{salvarani2009asymptotic} \cite{salvarani2009asymptotic}  and  \citeauthor*{davis1996some} \cite{davis1996some}).  

\medskip
To the best of our knowledge and in contrast to \eqref{log-pde2}, studies of the theory and approximation of \eqref{log-pde} are close to non-existent so far. 
The only related problem we are aware of is the parabolic Monge--Amp\`ere equation---for the study of the K\"ahler--Ricci flow---together with an initial condition (without boundary conditions; see \citeauthor*{imbert2013introduction} \cite[page 10 and Remark 2.3.2]{imbert2013introduction}). The present paper shows the well-posedness of \eqref{log-pde}--\eqref{tc}--\eqref{bc} and characterises its properties, filling in particular the gap between \eqref{log-pde} and \eqref{log-pde2} by a representation formula. We will also work heavily with the control formulation of the
problem, specifically to show a lower positive bound for the optimal control, which allows us to deduce strict ellipticity of the PDE. Before stating our main result, we introduce the following definition of \emph{weak solution} to the logarithmic diffusion equation in \citeauthor*{salvarani2009asymptotic} \cite{salvarani2009asymptotic}. 

\begin{Definition}
\label{def:weak}
Let $g\in H^1((0,1)) \cap \L^\infty((0,1))$ be non-negative and $c>0$. Then $u:\overline \Omega_T\longrightarrow\R$ is said to be a weak solution to 
\begin{align*}
2\partial_tu(t,x) =\partial_{xx} \big(\log(u(t,x)\big),\; (t,x) \in \Omega_T, \; u(0,x) = g(x), \;  x \in (0,1), \; u(t,0) = u(t,1) = c,\;   t \in (0,T),
\end{align*}
if the following conditions hold
\begin{enumerate}
\item[$(i)$] $u(t,0)= u(t,1) = c$ for all $t \in (0,T);$
\item[$(ii)$] $u\in \L^2(\Omega_T)\cap C([0,T], H^1((0,1))$ is non-negative$;$
\label{item:sobolev}
\item[$(iii)$]  $\log(u)\in \L^1_{\mathrm{loc}}(\Omega_T)$, $\partial_x\log(u)\in \L^2(\Omega_T);$
\item[$(iv)$] the identity
\[
\int_0^T\int_0^1 \bigg(u(t,x)\partial_t\phi(t,x) -\frac 1 2 \partial_x\big(\log(u(t,x))\big)\partial_x\phi(t,x) \bigg) \, \mathrm{d} x\, \mathrm{d} t + \int_0^1 g(x)\phi(0,x) \, \mathrm{d} x  =0,
\]
holds for all $\phi\in H^1(\Omega_T)\cap C(\overline \Omega_T)$ vanishing at $t=T$, $x=0$ and $x=1$.
\end{enumerate}
\end{Definition}

Define $e_\infty(x)\coloneqq x(1-x)/2,\; x\in[0,1],$ the stationary solution to \eqref{log-pde}--\eqref{bc}. Our main result is the following. 
\begin{Theorem}
\label{thm:main}
There exists a unique solution $e\in C^{1,2}(\Omega_T)$ to \eqref{log-pde}--\eqref{tc}--\eqref{bc}. Moreover, it holds that
\begin{enumerate}[label=$(\roman*)$]
  \item \label{it_bdd} $0\le e(t,x)\le e_\infty(x)$ for all $(t,x)\in \overline \Omega_T;$
    \item \label{it_int} $e$ admits the following integral representation
   \begin{equation}\label{candidate}
e(t,x)=-\int_0^x\int_0^y p(T-t,z)  \, \mathrm{d} z\, \mathrm{d} y + x\int_0^1\int_0^y p(T-t,z)   \, \mathrm{d} z\, \mathrm{d} y,\; (t,x) \in  \overline \Omega_T,
\end{equation}
where $p$ is the unique weak solution to \eqref{log-pde2}--\eqref{tc2}--\eqref{bc2}$;$
  \item \label{it_decr} $t\longmapsto e(t,x)$ is non-increasing for all $x\in [0,1];$
  \item \label{it_ccv} $x\longmapsto e(t,x)$ is concave and symmetric with respect to $x=1/2$, for all $t\in [0,T]$.
 \end{enumerate}
\end{Theorem}

These key features qualitatively describe the graph seen in \Cref{fig:3d-con} below, where we set again $T=1$. Notice also that $\partial_t e(t,x) \longrightarrow -\infty$ for $t \uparrow 1$, which is necessary for $\partial_{xx}e(x,t)$ being continuous in time at $t=1$.
\begin{figure}[ht!]
\hspace{-0.2 cm}
\includegraphics[width=0.50\columnwidth]{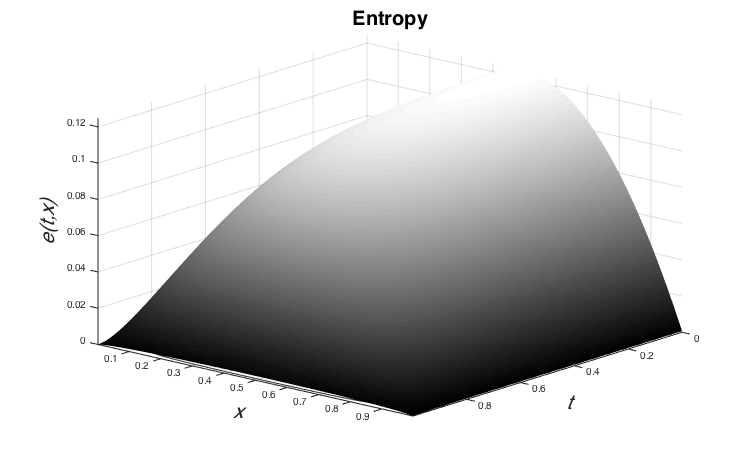}
\hspace{-0.5 cm}
\includegraphics[width=0.50\columnwidth]{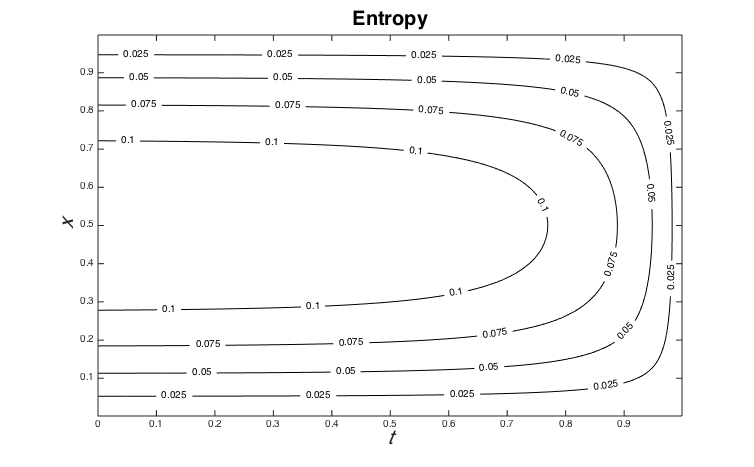}
\caption{\small Entropy $e$ as surface and contour plot.}
\label{fig:3d-con}
\end{figure}

\medskip
We observe that the curves $x\longmapsto e(t,x)$ (resp. $x\longmapsto \partial_{xx}e(t,x)$ in \Cref{fig:uxx-vol} at the end of the paper) 
evolve very slowly for $t\in [0,0.7]$ and vary increasingly fast for $t\in [0.9,1)$. This indicates that the criterion ‘most random martingale’ is consistent with the notion of ‘most uncertain match’, where the outcome (conditional on the match not having ended prematurely) is still uncertain until near the end; see also \Cref{fig:dens}. 

\medskip
In Section \ref{sec:asympt} we will derive an asymptotic approximation of $p$, $e$ and $\sigma^\star$
as $t \longrightarrow T$, which shows that in an `outer' region away from the boundaries our solution has the same asymptotic behaviour as the one from \cite{backhoff2023most}, while in an `inner' region close to the boundaries  the solution transitions from the boundary values to the `outer' solution through
a smooth function given in semi-analytic form.

\medskip
Our last main result pertains to \Cref{eq:firstvol} whenever we consider the corresponding optimal volatility. We thus denote the candidate optimal volatility $\sigma^\star: [0,T]\times \mathbb{R} \longmapsto [0,\infty]$ with $\sigma^\star(t,x) = p(T-t,x)^{-1/2}$
for $x \in [0,1]$ and extend it continuously by 1 for $x \notin [0,1]$, $t<T$, if needed.
The fact that  $p(0, \cdot) = 0$ implies $\sigma^\star(t,\cdot) \longrightarrow \infty$ as $t\longrightarrow T$; in fact, the asymptotic analysis
in Section \ref{sec:asympt} reveals that the singularity at $T$ has the same order as $\bar\sigma$.
We show that  $\sigma^\star$ is sufficiently well-behaved prior to $T$ so that the following SDE is indeed well-posed:
\begin{equation}
\label{sde_opt}
X_t =\frac 1 2 + \int_0^t\sigma^\star(s,X_s)   \, \mathrm{d} W_s.
\end{equation}

\begin{Theorem}\label{th:wellposedsde}
There is a unique strong solution $X^\star$ to \eqref{sde_opt}. Moreover, $\mathbb{P}[\tau^\star \le T] = 1$,
for $\tau^\star \coloneqq \inf \{t\in [0,T]:  X^\star_t\notin (0,1)\}$.
\end{Theorem}

\section{A control problem representation}\label{sec:control}

Inspired by the Legendre transform of the concave function $(-\infty,0)\ni x\longmapsto \log(-x)\in\R$, \emph{i.e.}
\[
\inf_{a\ge 0} \big\{-a x - \log a - 1\big\}=
     -\infty\mathbf{1}_{\{x\ge 0\}}+
       \log(-x)\mathbf{1}_{\{x<0\}},
\]
 the PDE \eqref{log-pde} can be rewritten as the following Hamilton--Jacobi--Bellman equation 
\begin{equation} \label{transform-pde}
 \partial_te(t,x) = \frac12\inf_{a\ge 0} \big\{-a \partial_{xx}e(t,x) - \log a - 1\big\}, \; (t,x) \in \Omega_T. 
\end{equation}
Now let us introduce the probabilistic counterpart of  \eqref{transform-pde}, and consider the following stochastic control problem. 
 Let $(\Omega,\mathcal F, \mathbb P)$ be a probability space on which a one dimensional $(\F,\P)$--Brownian motion $W$ is defined, where $\F$ is the $\P$-completion of the natural filtration of $W$. Denote by $\mathcal A$  the set of $\F$--progressively measurable processes $\alpha=(\alpha_t)_{t\ge 0}$ taking values in $\mathbb R_+$ such that $\mathbb E^\P\big[\int_0^t \alpha_s \, \mathrm{d} s\big]<\infty,\; \forall t\in[0,T].$ For $\alpha\in\mathcal A$, $t\in [0,T]$ and $x\in [0,1]$, denote by $X^{\alpha,t,x}=(X_s^{\alpha,t,x})_{s\in [t,T]}$ the controlled process given as 
\[
X_s^{\alpha,t,x}\coloneqq x+ \int_t ^s \sqrt{\alpha_r}\, \mathrm{d} W_r.
\]
Define further the reward function 
\[
J(t,x,\alpha)\coloneqq \E^\P\bigg[ \frac{1}{2} \int_t^{\min\{T,\tau^{\smallfont{\alpha}\smallfont{,}\smallfont{t}\smallfont{,}\smallfont{x}}\}}\big(1+\log(\alpha_s)\big)\, \mathrm{d} s\bigg],\; \mbox{with}\; \tau^{\alpha,t,x}\coloneqq \inf\big\{s\ge t:  X^{\alpha,t,x}_s\notin (0,1)\big\},
\]
and the value function 
\begin{equation} \label{def:val-fun} 
v(t,x)\coloneqq \sup_{\alpha\in\Ac} J(t,x,\alpha),\;  (t,x)\in \overline\Omega_T.
\end{equation} 
It follows by definition that  $v(T,x)=v(t,0)=v(t,1)=0$ for all $(t,x)\in \overline\Omega_T$. Provided that there exists a classical solution with $\partial_{xx}e(x,t) < 0$, the minimiser of the right-hand side in  \eqref{transform-pde} is then $a^\star(t,x) \coloneqq  -1/\partial_{xx}e(t,x)$, which makes the link with the choice of the optimal volatility function $\sigma^\star$ as explained in \Cref{sec:intro}. In particular, $a^\star(t,0) = a^\star(t,1) = 1$. 

\medskip
Our goal here is to identify $v$ as the unique bounded viscosity solution of \eqref{log-pde}--\eqref{bc}--\eqref{tc}, and to prove further that $v$ is smooth enough. In order to do so, the current section focuses on first proving that $v$ is indeed bounded, and second that $v(t,x)$ can never be achieved by $J(t,x,\alpha)$ when the control $\alpha$ is too close to $0$. The latter property is fundamental for obtaining a regularity result later on, as it will ensure that \Cref{transform-pde} is uniformly elliptic. We start by establishing the boundedness of $v$. 
\begin{Lemma}\label{lem:bounded}
For every $(t,x)\in\overline\Omega_T$, the inequality $0\le v(t,x)\leq e_\infty(x)\leq 1/8$ holds.
\end{Lemma}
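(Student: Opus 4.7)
The plan is to prove the two bounds separately, exploiting that the stationary reference $e_\infty(x)=x(1-x)/2$ is particularly convenient: it satisfies $e_\infty''\equiv -1$, vanishes at $x\in\{0,1\}$, and is non-negative on $[0,1]$.

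For the lower bound $v\ge 0$, I would simply test $v$ against the constant control $\alpha\equiv 1\in \Ac$; this produces the non-negative integrand $1+\log \alpha_s \equiv 1$, so $J(t,x,1)\ge 0$. The boundary values $v(T,\cdot)=v(\cdot,0)=v(\cdot,1)=0$ come immediately from the definition of $\tau^{\alpha,t,x}$.

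The main work is the upper bound $v(t,x)\le e_\infty(x)$. I would fix an arbitrary $\alpha\in\Ac$, set $\theta\coloneqq \min\{T,\tau^{\alpha,t,x}\}$, and apply Itô's formula to $e_\infty(X^{\alpha,t,x}_\cdot)$ up to $\theta$. Because $e_\infty''\equiv -1$, taking expectations should yield
\[
\E^\P\big[e_\infty\big(X^{\alpha,t,x}_\theta\big)\big] = e_\infty(x) - \frac{1}{2}\E^\P\bigg[\int_t^\theta \alpha_s\,\mathrm{d}s\bigg].
\]
Since $e_\infty\ge 0$ on $[0,1]$, this gives $\tfrac{1}{2}\E^\P[\int_t^\theta \alpha_s\,\mathrm{d}s]\le e_\infty(x)$. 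The passage from this to a bound on $J(t,x,\alpha)$ is then immediate through the elementary inequality $1+\log a\le a$, valid for all $a\ge 0$ (with $\log 0\coloneqq -\infty$), so that $J(t,x,\alpha)\le \tfrac{1}{2}\E^\P[\int_t^\theta \alpha_s\,\mathrm{d}s]\le e_\infty(x)$; taking the supremum over $\alpha$ yields $v(t,x)\le e_\infty(x)$. The trailing bound $e_\infty(x)\le 1/8$ is simply the maximum of $x(1-x)/2$, attained at $x=1/2$.

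The one point that I expect to require actual care is justifying that the expected value of the stochastic-integral term in the Itô expansion vanishes, \emph{i.e.}\ that this term is a genuine martingale rather than merely a local one. Here it is cheap: $|e_\infty'|\le 1/2$ on $[0,1]$ and the process is stopped at $\theta$, while the definition of $\Ac$ imposes $\E^\P[\int_0^T\alpha_s\,\mathrm{d}s]<\infty$, so its quadratic variation is integrable and no further localisation argument is needed.
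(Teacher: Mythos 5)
Your argument is correct and follows essentially the same route as the paper: Itô's formula applied to $e_\infty$, the inequality $1+\log a \le a$, and bounded-derivative/integrable-quadratic-variation reasoning to discard the stochastic integral. The only cosmetic difference is the constant control used for the lower bound ($\alpha\equiv 1$ giving $J\ge 0$, versus the paper's $\alpha\equiv 1/\mathrm{e}$ giving $J=0$ exactly); both work.
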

\begin{proof}
For the lower bound, we simply notice that $v(t,x)\ge J(t,x,1/{\rm e})= 0.$ For the upper bound, recall $e_\infty^\prime(x)=1/2-x$ and $e_\infty^{\prime\prime}(x)=-1$, fix an arbitrary admissible control $\alpha\in\Ac$, let $\theta^\alpha\coloneqq \min\{T,\tau^{\alpha,t,x}\}$, and apply It\^o's formula to get
\begin{align*}
e_\infty\big(X^{\alpha,t,x}_{\theta^\smallfont{\alpha}}\big)-e_\infty(x)&=\int_t^{\theta^\smallfont{\alpha}}\sqrt{\alpha_s}e_\infty^\prime(X^{\alpha,t,x}_s) \, \mathrm{d}W_s-\frac12\int_t^{\theta^\smallfont{\alpha}}\alpha_s \, \mathrm{d}s\\
&\leq \int_t^{\theta^\smallfont{\alpha}}\sqrt{\alpha_s}e_\infty^\prime(X^{\alpha,t,x}_s) \, \mathrm{d}W_s-\frac12\int_t^{\theta^\smallfont{\alpha}}\big(1+\log(\alpha_s))\big) \, \mathrm{d}s,
\end{align*}
where we note that $1+\log (y)\le y$ for all $y\ge 0$. 
Since $e_\infty^\prime$ is bounded on $\Omega_T$, and $\alpha$ is non-negative, we can take expectations above and deduce that
\[
e_\infty(x)\geq \E^\P\bigg[e_\infty\big(X^{\alpha,t,x}_{\theta^\smallfont{\alpha}}\big)+\frac12\int_t^{\theta^\smallfont{\alpha}}\big(1+\log(\alpha_s))\big) \, \mathrm{d}s\bigg]\geq \E^\P\bigg[\frac12\int_t^{\theta^\smallfont{\alpha}}\big(1+\log(\alpha_s))\big) \, \mathrm{d}s\bigg]=J(t,x,\alpha).
\]
By the arbitrariness of $\alpha\in\Ac$, this proves the desired result.
\qed
\end{proof}

\medskip
Next, for each $c\geq 0$, define the subset $\mathcal A_c\coloneqq \{\alpha\in\Ac:  \inf_{t\ge 0} \alpha_t\ge c\}$, and notice that $\Ac=\Ac_0$. Then the following proposition shows that the optimal control can be achieved in $\Ac_c$ for $c>0$ small enough.  
\begin{Proposition}\label{prop:elliptic}
For every $(t,x)\in \overline\Omega_T$, it holds that 
\[
v(t,x)=\sup_{\alpha\in\Ac_{\smallfont{1}\smallfont{/}\smallfont{\mathrm{e}}}} J(t,x,\alpha).
\]  
\end{Proposition}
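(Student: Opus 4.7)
The plan is to apply a Dambis--Dubins--Schwarz (DDS) time change in order to recast the reward into a form where the exit time of the state process from $(0,1)$ is replaced by the exit time of a fixed Brownian motion, completely decoupled from the control. Restricting without loss of generality to $\alpha>0$ almost everywhere (otherwise $J(t,x,\alpha)=-\infty$), the process $X^{\alpha,t,x}$ is a continuous martingale with strictly increasing continuous quadratic variation $A_\cdot$, where $A_s \coloneqq \int_t^s \alpha_r \, \mathrm{d}r$ (so $A_t = 0$). DDS then furnishes, on a possibly enlarged probability space, a Brownian motion $B$ with $B_0 = x$ such that $X^{\alpha,t,x}_s = B_{A_s}$, and setting $\sigma \coloneqq \inf\{u \geq 0 : B_u \notin (0,1)\}$---which depends only on $B$---we obtain $\tau^{\alpha,t,x} = A^{-1}(\sigma)$. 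With the change of variable $u = A_s$, the reward becomes
\[
J(t,x,\alpha) = \frac{1}{2} \E^\P\bigg[ \int_0^{A_T \wedge \sigma} g(\beta_u) \,\mathrm{d}u \bigg],\; \text{where} \; g(y) \coloneqq \frac{1 + \log y}{y} \; \text{and} \; \beta_u \coloneqq \alpha_{A^{-1}(u)}.
\]

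The critical algebraic observation is that $g(y) \geq 0$ if and only if $y \geq 1/\mathrm{e}$ (with $g(1/\mathrm{e}) = 0$), so that $g(y \vee 1/\mathrm{e}) \geq g(y)$ for every $y > 0$, with strict inequality whenever $y < 1/\mathrm{e}$. Given $\alpha \in \Ac$, I would set $\tilde\beta_u \coloneqq \beta_u \vee 1/\mathrm{e}$ and invert the DDS time change associated with $\tilde\beta$ to recover an admissible control $\tilde\alpha \in \Ac_{1/\mathrm{e}}$. Since $1/\tilde\beta \leq 1/\beta$ pointwise, the new horizon $\tilde A_T$ determined by $\int_0^{\tilde A_T} \mathrm{d}u / \tilde\beta_u = T - t$ satisfies $\tilde A_T \geq A_T$, and splitting the integration at $A_T \wedge \sigma$ yields
\[
J(t,x,\tilde\alpha) - J(t,x,\alpha) = \frac{1}{2} \E^\P\bigg[ \int_0^{A_T \wedge \sigma} \big(g(\tilde\beta_u) - g(\beta_u)\big) \,\mathrm{d}u + \int_{A_T \wedge \sigma}^{\tilde A_T \wedge \sigma} g(\tilde\beta_u) \,\mathrm{d}u \bigg] \geq 0,
\]
since both integrands are non-negative ($g(\tilde\beta) \geq g(\beta)$ on the first interval, and $g(\tilde\beta)\geq g(1/\mathrm{e})=0$ on the second). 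Combined with the trivial bound $\sup_{\Ac_{1/\mathrm{e}}} J \leq \sup_{\Ac} J$, this establishes the proposition.

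The principal obstacle will be the construction of $\tilde\alpha$ from $\tilde\beta$ while preserving adaptedness to the filtration $\F$ of $W$, since $\tilde\beta$ is naturally adapted only to the filtration of the DDS Brownian motion $B$. This is typically resolved by passing to a weak formulation of the control problem---in which admissibility is relaxed to tuples $(\Omega, \F, \P, W, \alpha)$ on any suitable filtered probability space---and invoking the classical equivalence of strong and weak value functions for problems of this type. A milder technical nuisance concerns the handling of the exceptional events $\{\alpha_s = 0\}$ or $\{A_T = \infty\}$, which can be treated by an approximation argument using the continuity of $g$ on $(0, \infty)$.
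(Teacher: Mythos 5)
Your proposal is correct, and while it shares the paper's overall strategy---compare an arbitrary $\alpha$ with its truncation from below via a Dambis--Dubins--Schwarz coupling through a common Brownian motion---the execution of the key comparison is genuinely different and, in my view, cleaner. The paper stays in the original time clock, where the two controls produce two \emph{different} exit times $S_\alpha\le S_\beta$ along the same Brownian path; the resulting pathwise inequality is then proved by splitting $F-\mathrm{e}^{-2}$ into its positive and negative parts and exploiting the concavity of $q\longmapsto 1+\tfrac12\log(\mathrm{e}^{-2}+q)$ together with a tangent-line bound at $0$ and the integrated comparison \eqref{ineq:time-change}. You instead transport the reward into the Brownian clock, where the exit time $\sigma$ is \emph{common} to both controls and the running cost becomes $g(y)=(1+\log y)/y$; the whole comparison then collapses to two elementary facts, namely $g(y\vee 1/\mathrm{e})\ge g(y)$ pointwise and $g\ge 0$ on $[1/\mathrm{e},\infty)$, so that the longer integration stretch $[A_T\wedge\sigma,\tilde A_T\wedge\sigma]$ can only help. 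This dispenses with the concavity argument entirely. Two caveats, both of which you flag and neither of which the paper treats in more detail: first, the control $\tilde\alpha$ recovered by inverting the time change of $\tilde\beta$ is a priori adapted only to a time-changed filtration, so you need either the weak/strong equivalence you invoke, or to check directly that the resulting weak control still gives a lower bound for the supremum (note that your $\tilde\alpha$ differs from the paper's candidate $\beta_t=\max\{\alpha_t,1/\mathrm{e}\}$, which is manifestly $\F$-adapted but leads to the harder original-clock comparison); second, one should verify admissibility of $\tilde\alpha$, \emph{i.e.}\ $\E^\P[\tilde A_T]<\infty$, which follows since $\tilde A_T-A_T=\int_0^{A_{\smallfont{T}}}\big(1/\beta_u-1/\tilde\beta_u\big)\,\mathrm{d}u\le T-t$ after normalising $\alpha\equiv 1$ beyond time $T$ (harmless, as $J$ only depends on $\alpha$ up to $T\wedge\tau^{\alpha,t,x}$). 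Degenerate controls with $\alpha=0$ on a set of positive measure before the exit give $J=-\infty$ and may be discarded, as you observe, since the positive part of $1+\log\alpha_s\le\alpha_s$ is integrable.
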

\begin{proof}
Without loss of generality, we only deal with the case $t=0$.  By definition, it suffices to prove $J(0,x,\alpha)\le J(0,x,\beta)$ for every $\alpha\in\Ac$, where $\beta\in\Ac_{1/\mathrm{e}}$ is defined by $\beta_t\coloneqq \max\{\alpha_t,1/\mathrm{e}\}$, $t\geq 0$. Mimicking the proof of the Dambis--Dubins--Schwarz theorem, one may find some probability space on which there exist a Brownian motion $B$ and two stochastic processes $f$, $g$ such that  $f_t\ge 0$,  $g_t=\max\{f_t,1/\mathrm{e}\}$ for all $t\ge 0$ and 
\[
\mbox{Law}(X^{\alpha,0,x})=\mbox{Law}\bigl((x+B_{\tau_\smallfont{t}})_{t\ge 0}\bigr),\; \mbox{Law}(X^{\beta,0,x})=\mbox{Law}\bigl((x+B_{\sigma_\smallfont{t}})_{t\ge 0}\bigr),
\]
where $\tau_t\coloneqq \int_0^t f_s^2\, \mathrm{d} s, \;\mbox{and}\; \sigma_t\coloneqq \int_0^t g_s^2\, \mathrm{d} s,\; \forall t\ge 0.$ Therefore
\[
J(0,x,\alpha)=\E^\P\bigg[\frac12\int_0^{\min\{T,S_\smallfont{\alpha}\}}\big(1+\log(f_s)\big)\, \mathrm{d} s\bigg],\; \mbox{and}\; J(0,x,\beta)=\E^\P\bigg[\frac12\int_0^{\min\{T,S_\smallfont{\beta}\}}\big(1+\log(g_s)\big)\, \mathrm{d} s\bigg], 
\]
where $S_\alpha$ (resp. $S_\beta$) denotes  the first time that the process $(x+B_{\tau_\smallfont{t}})_{t\ge 0}$ (resp. $(x+B_{\sigma_\smallfont{t}})_{t\ge 0}$) exits from $(0,1)$. In particular, it follows that $S_\alpha = \inf\{t\ge 0:  \tau_t\ge S\},\; \mbox{and}\; S_\beta = \inf\{t\ge 0: \sigma_t\ge S\},$ where $S$ is the first exit time of $(x+B_t)_{t\ge 0}$ from $(0,1)$. In what follows, we prove the stronger pathwise result 
\begin{equation}\label{eq:strongresult}
\int_0^{\min\{T,S_\smallfont{\alpha}\}}\big(1+\log(f_s)\big)\, \mathrm{d} s \le \int_0^{\min\{T,S_\smallfont{\beta}\}}\big(1+\log(g_s)\big)\, \mathrm{d} s, 
\end{equation}
which yields immediately
\[
\E^\P\bigg[\int_0^{\min\{T,S_\smallfont{\alpha}\}}\big(1+\log(f_s)\big)\, \mathrm{d} s\bigg]\le \E^\P\bigg[\int_0^{\min\{T,S_\smallfont{\beta}\}}\big(1+\log(g_s)\big)\, \mathrm{d} s\bigg], 
\] 
as desired. In order to get the aforementioned result in \Cref{eq:strongresult}, set $\tilde S_\alpha\coloneqq \min\{T,S_\alpha\}$,  $\tilde S_\beta\coloneqq \min\{T,S_\beta\}$, $F_t\coloneqq f_t^2$ and $G_t\coloneqq g_t^2$, $t\geq 0$. Then one has by assumption
\[
\int_0^{S_{\smallfont{\alpha}}} F_t \, \mathrm{d} t = \int_0^{S_{\smallfont{\beta}}} G_t \, \mathrm{d} t,\; \mbox{and}\; \int_0^{\tilde S_{\smallfont\alpha}} F_t \, \mathrm{d} t \le \int_0^{\tilde S_{\smallfont\beta}} G_t \, \mathrm{d} t. 
\] 
Indeed, the first equality simply comes from the fact that $S_\alpha$ and $S_\beta$ are the right-inverses of $\tau_\cdot$ and $\sigma_\cdot$ at $S$, while the second inequality can be obtained by considering all the possible case, and recalling that we always have $S_\beta\geq S_\alpha$. For convenience, we now define $H\coloneqq F-\mathrm{e}^{-2}$. Notice then that $G_t=\max\big\{F_t,\mathrm{e}^{-2}\big\}=\mathrm{e}^{-2}+H_t^+,\; t\geq 0.$ It follows that
\[
\int_0^{\tilde S_\smallfont{\alpha}} \big(\mathrm{e}^{-2}+H_t^+-H_t^-\big)\, \mathrm{d} t\le\int_0^{\tilde S_\smallfont{\beta}}\big(\mathrm{e}^{-2}+H_t^+\big)\, \mathrm{d} t,
\] 
or equivalently
\begin{equation}  \label{ineq:time-change}
\mathrm{e}^{-2}(\tilde S_\alpha-\tilde S_\beta)+\int_{\tilde S_\smallfont{\beta}}^{\tilde S_\smallfont{\alpha}} H_t^+\, \mathrm{d} t-\int_0^{\tilde S_\smallfont{\alpha}} H_t^-\, \mathrm{d} t\le 0. 
\end{equation}
We now claim that the following inequality holds
\begin{equation} \label{ineq:desire}
\int_0^{\tilde S_\smallfont{\alpha}}  \bigg(1+ \frac{1}{2}\log\big(\mathrm{e}^{-2}+{H_t^+-H_t^-}\big)\bigg)\, \mathrm{d} t\le\int_0^{\tilde S_\smallfont{\beta}}  \bigg(1+ \frac{1}{2}\log\big(\mathrm{e}^{-2}+{H_t^+}\big)\bigg)\, \mathrm{d} t.
\end{equation} 
Indeed, since $H^+$ and $H^-$ have disjoint supports and $1+\log(\mathrm{e}^{-2})/2=0$, we have 
\[
1+\frac 12\log\big(\mathrm{e}^{-2}+{H_t^+-H_t^-}\big)=\bigg(1+\frac 12\log\big(\mathrm{e}^{-2}+{H_t^+}\big)\bigg)+\bigg(1+\frac 12\log\big(\mathrm{e}^{-2}-{H_t^-}\big)\bigg),
\]
and thus proving \Cref{ineq:desire} is equivalent to showing that
\[
\int_{\tilde S_\smallfont{\beta}}^{\tilde S_\smallfont{\alpha}}\bigg(1+\frac 12\log\big(\mathrm{e}^{-2}+{H_t^+}\big)\bigg)\, \mathrm{d} t+\int_0^{\tilde S_{\smallfont\alpha}} \bigg(1+\frac 12\log\big(\mathrm{e}^{-2}-{H_t^-}\big)\bigg)\, \mathrm{d} t\le 0.
\] 
Note further that $x\longmapsto 1+\log(\mathrm{e}^{-2}+x)/2$ is concave, vanishes at $0$, and its derivative at $0$ is $\mathrm{e}^2/2>0$. This implies that \eqref{ineq:desire} follows from $\int_{\tilde S_\smallfont{\beta}}^{\tilde T_\smallfont{\alpha}} H_t^+\, \mathrm{d} t-\int_0^{\tilde S_{\smallfont\alpha}} H_t^-\, \mathrm{d} t\le 0,$ which is clearly ensured by \Cref{ineq:time-change}. Hence, we obtain
\begin{align*}
\int_0^{\tilde S_\smallfont{\alpha}}\big(1+\log(f_t)\big)\, \mathrm{d} t &= \int_0^{\tilde S_\smallfont{\alpha}}  \bigg(1+ \frac{1}{2}\log\big(\mathrm{e}^{-2}+{H_t^+-H_t^-}\big)\bigg)\, \mathrm{d} t\\
&\le\int_0^{\tilde S_\smallfont{\beta}}  \bigg(1+ \frac{1}{2}\log\big(\mathrm{e}^{-2}+{H_t^+}\big)\bigg)\, \mathrm{d} t= \int_0^{\tilde S_\smallfont{\beta}}\big(1+\log(g_t)\big)\, \mathrm{d} t,
\end{align*}
which ends the proof. \qed
\end{proof}

\section{Proof of the main results}\label{sec:viscosity}

\subsection{Regularity of the entropy function}
Summarising the results from \Cref{sec:control}, and using in particular \Cref{prop:elliptic}, we deduce that
\[
v(t,x)=\sup_{\alpha\in\Ac_{\smallfont{1}\smallfont{/}\smallfont{\mathrm{e}}}} J(t,x,\alpha)\eqqcolon w(t,x),\; \forall (t,x)\in \overline\Omega_T,  
\]
where the right-hand-side of the above equality corresponds to an alternative Hamilton--Jacobi--Bellman PDE, which is now uniformly elliptic
\begin{equation} \label{elliptic-pde}
-\partial_tw(t,x) - \frac12\sup_{a\ge 1/\mathrm{e}} \big\{a \partial_{xx}w(t,x) + \log a + 1\big\}=0, \; (t,x) \in \Omega_T,\; w(T,\cdot) = w(\cdot,0) = w(\cdot,1)=0.
\end{equation}
In order to prove the desired regularity for $v$, we will need a comparison result.

\begin{Lemma}\label{lem:comp}
Fix some $c\geq 0$. Let $u$ and $v$ be respectively a concave bounded upper-semicontinuous viscosity sub-solution and a concave bounded lower-semicontinuous viscosity super-solution of 
\begin{equation}\label{eq:dppeqn}
-\partial_tw(t,x) - \frac12\sup_{a\ge c} \big\{a\partial_{xx}w(t,x) + \log(a) +1\big\}=0, \;  (t,x) \in \Omega_T.
\end{equation}
such that $u(t,0)\leq v(t,0)$, $u(t,1)\leq v(t,1)$ and $u(T,x)\leq v(T, x)$ for all $(t,x)\in\overline \Omega_T$. Then $u\le v$ on $\overline \Omega_T$.
\end{Lemma}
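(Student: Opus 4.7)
The plan is to follow the classical Crandall--Ishii doubling-of-variables comparison scheme, adapted to the singular logarithmic Hamiltonian. Suppose, for contradiction, that $M\coloneqq \sup_{\overline \Omega_T}(u - v) > 0$; the boundary assumptions force this supremum to be attained at some interior point. To produce a strict sub-solution, I would set $u_\eta(t, x) \coloneqq u(t, x) - \eta(T - t)$ for small $\eta > 0$; a direct computation on test functions touching $u_\eta$ from above then shows, in the viscosity sense,
\[
-\partial_t u_\eta - \frac12 \sup_{a \geq c}\bigl\{a \partial_{xx} u_\eta + \log(a) + 1\bigr\} \leq -\eta \;\mbox{on } \Omega_T,
\]
while the ordering $u_\eta \leq v$ on the parabolic boundary $\{T\} \times [0,1] \cup [0,T] \times \{0, 1\}$ is preserved; for $\eta$ small enough, $\sup_{\overline \Omega_T}(u_\eta - v)$ remains strictly positive and is still attained in the interior.

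Next, for $\varepsilon > 0$, I would introduce the doubled functional
\[
\Phi_\varepsilon(t, x, s, y) \coloneqq u_\eta(t, x) - v(s, y) - \frac{(t - s)^2 + (x - y)^2}{2\varepsilon},
\]
whose maximiser $(t_\varepsilon, x_\varepsilon, s_\varepsilon, y_\varepsilon) \in \overline \Omega_T \times \overline \Omega_T$ lies in the interior for $\varepsilon$ small, with the penalty terms vanishing in the limit. The parabolic Crandall--Ishii lemma then supplies second-order jets
\[
(a_\varepsilon, p_\varepsilon, X_1) \in \overline{\mathcal P}^{2,+} u_\eta(t_\varepsilon, x_\varepsilon), \; (a_\varepsilon, p_\varepsilon, X_2) \in \overline{\mathcal P}^{2,-} v(s_\varepsilon, y_\varepsilon),
\]
with identical time-derivatives $a_\varepsilon = (t_\varepsilon - s_\varepsilon)/\varepsilon$, identical space-gradients $p_\varepsilon = (x_\varepsilon - y_\varepsilon)/\varepsilon$, and the key inequality $X_1 \leq X_2$. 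Writing the strict sub-solution inequality at $(t_\varepsilon, x_\varepsilon)$ and the super-solution inequality at $(s_\varepsilon, y_\varepsilon)$ and subtracting, the time parts cancel and one obtains
\[
\frac12 \Bigl(\sup_{a \geq c}\bigl\{a X_2 + \log(a) + 1\bigr\} - \sup_{a \geq c}\bigl\{a X_1 + \log(a) + 1\bigr\}\Bigr) \leq -\eta.
\]

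The super-solution inequality requires its supremum to be finite, and since for $c \geq 0$ this supremum equals $+\infty$ whenever $X_2 \geq 0$ (let $a \to \infty$), one necessarily has $X_2 < 0$, so that $X_1 \leq X_2 < 0$ ensures both suprema are finite. Because $X \longmapsto \sup_{a \geq c}\{aX + \log(a) + 1\}$ is non-decreasing (a supremum of linear functions with non-negative slopes), the inequality $X_1 \leq X_2$ forces the left-hand side above to be non-negative, contradicting $\eta > 0$ and hence establishing $u \leq v$. The main obstacle is precisely this singular character of the Hamiltonian, which blows up as soon as $\partial_{xx}w \geq 0$ and prevents a direct appeal to an off-the-shelf comparison theorem; the resolution relies on the self-correcting interplay between the super-solution property (which forces the relevant jet element into the finite regime) and the matrix inequality from Crandall--Ishii, with the concavity hypotheses on $u$ and $v$ providing the regularity needed for the doubling limits to be well-behaved.
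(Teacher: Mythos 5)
Your argument is correct and is essentially the paper's proof written out in full: the paper simply invokes the standard comparison theorems (\citeauthor*{crandall1992user} \cite[Theorem 8.2]{crandall1992user}, \citeauthor*{fleming2006controlled} \cite[Theorem V.8.1]{fleming2006controlled}) and isolates the same key structural fact you use, namely that $q\longmapsto F_c(q)\coloneqq\sup_{a\ge c}\{aq+\log(a)+1\}$ is monotone in the direction forced by the Crandall--Ishii matrix inequality, so that $F_c(X_1)\le F_c(X_2)$ once the super-solution property pins $X_2<0$ and hence both values are finite. (Your observation that $F_c$ is non-\emph{decreasing} is the correct orientation; the paper's phrase ``non-increasing'' is a slip, as its own displayed conclusion $F_c(X)-F_c(Y)\le 0$ for $X\le Y$ shows.)
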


\begin{proof}
This is a very standard result for which we can refer to either \citeauthor*{fleming2006controlled} \cite[Theorem V.8.1 and Remark V.8.1]{fleming2006controlled} or \citeauthor*{crandall1992user} \cite[Theorem 8.2]{crandall1992user}. The main point is to notice that for a given $\lambda>0$, given $(X,Y)\in\R^2$ such that
\[
-3\lambda\begin{pmatrix}
1&0\\
0&1
\end{pmatrix}\leq \begin{pmatrix}
X&0\\
0&-Y
\end{pmatrix}\leq 3\lambda \begin{pmatrix}
1&-1\\
-1&1
\end{pmatrix},
\]
if we define 
\[
F_c(q)\coloneqq \sup_{a\ge c} \big\{aq + \log(a) +1\big\}=+\infty\mathbf{1}_{\{q\geq  0\}}+\log(-q)\mathbf{1}_{\{0>q\geq -1/c\}}+\big(cq+\log(c)+1\big)\mathbf{1}_{\{q<-1/c\}},
\]
then  we have $F_c(X)-F_c(Y)\leq 0$ since $F_c$ is non-increasing. It is then direct using the aforementioned references to deduce the desired result.
\qed
\end{proof}

\medskip
Given the previous comparison theorem, the following result is now standard.
\begin{Proposition}
\label{prop:c21}
The function $w$ is the unique bounded continuous viscosity solution to both {\rm \Cref{transform-pde,elliptic-pde}}. Moreover, $w\in C^{1,2}(\Omega_T)$ is concave in $x$.
\end{Proposition}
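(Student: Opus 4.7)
My plan is to establish, in order: (a) $w$ is a bounded continuous viscosity solution of both \eqref{transform-pde} and \eqref{elliptic-pde}; (b) $x\longmapsto w(t,x)$ is concave; (c) uniqueness in this class follows from \Cref{lem:comp}; (d) $w\in C^{1,2}(\Omega_T)$ by interior regularity for convex uniformly parabolic HJB equations.

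For (a), by \Cref{prop:elliptic}, the function $w$ coincides with the value function of both the unrestricted ($\alpha\in\Ac$) and the restricted ($\alpha\in\Ac_{1/\mathrm{e}}$) stochastic control problems. Both satisfy the dynamic programming principle, from which standard arguments (see, e.g., \cite{fleming2006controlled}) yield that $w$ is a bounded viscosity sub- and super-solution of each of \eqref{transform-pde} and \eqref{elliptic-pde}. Continuity of $w$ on $\overline{\Omega}_T$---with boundary values $w(T,\cdot)=w(\cdot,0)=w(\cdot,1)=0$---follows from \Cref{lem:bounded} together with standard control-theoretic estimates for the exit time and the running reward.

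For (b), I would exploit the fact that the Hamiltonian $H_0(q)\coloneqq\frac{1}{2}\inf_{a\geq 0}\{-aq-\log a - 1\}$ appearing in \eqref{transform-pde} equals $-\infty$ for $q\geq 0$. The viscosity sub-solution inequality $\partial_t\phi(t_0,x_0)\leq H_0(\partial_{xx}\phi(t_0,x_0))$, imposed at every $C^{1,2}$ test function $\phi$ touching $w$ from above at an interior point $(t_0,x_0)$, therefore forces $\partial_{xx}\phi(t_0,x_0)<0$. Were $x\longmapsto w(t,x)$ not concave, one could produce an affine function lying above $w$ on an appropriate subinterval and then lower it to first contact, supplying a super-tangent test function with vanishing Hessian and contradicting the sub-solution property; hence $w$ is concave in $x$. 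Symmetry $w(t,x)=w(t,1-x)$ follows from invariance of \eqref{elliptic-pde} and its data under $x\leftrightarrow 1-x$. Item (c) is then a direct application of \Cref{lem:comp} between $w$ and any competing concave bounded continuous viscosity solution.

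For (d), concavity gives $\partial_{xx}w\leq 0$, and one needs a two-sided bound on $\partial_{xx}w$ on compact subsets of $\Omega_T$ to ensure uniform parabolicity of \eqref{elliptic-pde}. The lower bound stems from the restricted control formulation: optimal controls $a^\star=-1/\partial_{xx}w$ cannot be too large without triggering rapid exit from $(0,1)$ and collapsing the running reward (here the bound $w\leq e_\infty$ from \Cref{lem:bounded} is the quantitative input); a strict upper bound $\partial_{xx}w\leq -\delta(t)<0$ away from $t=T$ uses that optimal volatilities remain bounded below on interior subdomains, so that $a^\star\geq 1/\mathrm{e}$ is effective. Since the Hamiltonian in \eqref{elliptic-pde} is convex in the Hessian variable, classical interior $C^{2,\alpha}$-theory for convex fully nonlinear uniformly parabolic equations (Evans-Krylov together with Krylov-Safonov) then yields $w\in C^{2,\alpha}_{\mathrm{loc}}(\Omega_T)\subset C^{1,2}(\Omega_T)$. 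The principal obstacle is obtaining these uniform two-sided bounds on $\partial_{xx}w$ on interior subdomains; a secondary subtle point is the viscosity-to-classical upgrade of concavity in (b), which, while standard for continuous $w$, should be handled with care.
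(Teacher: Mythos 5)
Your skeleton is the paper's: value function $\Rightarrow$ viscosity solution of both \eqref{transform-pde} and \eqref{elliptic-pde}, concavity from the explosion of the Hamiltonian at non-negative Hessians, uniqueness from \Cref{lem:comp}, and Evans--Krylov for the $C^{1,2}$ upgrade. But two of your steps have real gaps. First, continuity: you assert that continuity of $w$ on $\overline\Omega_T$ ``follows from standard control-theoretic estimates for the exit time and the running reward''. That is precisely the hard part of your route --- the running reward $\log\alpha$ is unbounded above and below, the diffusion is fully degenerate (one may take $\alpha=0$), and continuity up to the lateral boundary $x\in\{0,1\}$ requires barrier constructions; none of this is automatic. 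The paper sidesteps it entirely: it only observes that $w$ is locally bounded, hence a \emph{discontinuous} viscosity solution, so that the upper-semicontinuous envelope $w^\star$ is a sub-solution and the lower-semicontinuous envelope $w_\star$ a super-solution; \Cref{lem:comp} then gives $w^\star\le w_\star$, i.e.\ continuity for free. You should replace your continuity claim by this Barles--Perthame argument (noting that \Cref{lem:comp} requires concavity, which the envelopes inherit from the sub-solution property by exactly your argument in (b); that same observation also upgrades your uniqueness statement from ``unique among concave solutions'' to the unqualified uniqueness claimed in the proposition).

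Second, regularity: your step (d) misattributes the roles of the two Hessian bounds. The inequality $a^\star=-1/\partial_{xx}w\ge 1/\mathrm{e}$ --- which is exactly the content of \Cref{prop:elliptic}, and is what licenses passing from \eqref{transform-pde} to \eqref{elliptic-pde} --- is equivalent to the \emph{lower} bound $\partial_{xx}w\ge-\mathrm{e}$; it does not give the strict upper bound $\partial_{xx}w\le-\delta<0$, and your heuristic that large controls ``trigger rapid exit and collapse the reward'' is not a proof of either bound. The paper does not attempt interior Hessian estimates at all: it applies Evans--Krylov directly to \eqref{elliptic-pde}, whose operator is concave in the Hessian and uniformly elliptic precisely because the control set has been restricted to $[1/\mathrm{e},\infty)$ by \Cref{prop:elliptic}. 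So the quantitative input you are looking for in (d) is \Cref{prop:elliptic}, which you already invoked in (a) but did not connect to ellipticity. Items (b) and (c) of your proposal match the paper, modulo the parabolic bookkeeping in the affine-touching argument that you rightly flag.
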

\begin{proof}
It is standard that $w$ is a (discontinuous) viscosity solution to   \Cref{elliptic-pde}, since we know by \Cref{lem:bounded} that it is bounded, and thus locally bounded. By \Cref{prop:elliptic}, it is also a (discontinuous) viscosity solution to \eqref{transform-pde}. This tells us that the lower-semicontinuous envelope $w_\star$ of $w$ is a viscosity super-solution of both \Cref{transform-pde,elliptic-pde} and that its upper-semicontinuous envelope $w^\star$ is a viscosity sub-solution of \Cref{transform-pde,elliptic-pde}. By \Cref{lem:comp} this proves $w^\star\leq w_\star$, and thus that equality holds, proving that $w$ is a continuous viscosity solution of \Cref{transform-pde,elliptic-pde}. 

\medskip
Concavity is immediate from the viscosity solution property we just proved, as the non-linearity explodes for non-negative values of the second-order derivative. As for the regularity, this comes from the celebrated Evans--Krylov theorem (see for instance \citeauthor*{krylov1984boundedly} \cite[Theorem 1.1]{krylov1984boundedly}), as the operator in \eqref{transform-pde} is concave with respect to the second-order derivative and uniformly elliptic.
\qed
\end{proof}

\subsection{Representation of the entropy function}
In the following, we denote by $e\coloneqq u=w$ the unique classical solution to \eqref{log-pde}--\eqref{tc}--\eqref{bc}, and we investigate its relation to the logarithmic diffusion equation  \eqref{log-pde2}--\eqref{tc2}--\eqref{bc2}. In order to derive the integral representation of $e$ in \Cref{thm:main}, we adopt an approximation argument. Namely, for every $n\in \N^\star$, consider the PDEs
\begin{align}
\label{n-log-pde}
2\partial_te(t,x)&= \log\big(-\partial_{xx}e(t,x)\big), \; (t,x) \in \Omega_T, \\
\label{n-tc}
e(T,x) &= \frac{e_\infty(x)}{n} ,\; x \in (0,1), \\
\label{n-bc}
e(t,0) &= e(t,1) = 0, \;  t \in (0,T),\\
\label{n-log-pde2}
2\partial_tp(t,x)&= \partial_{xx}\big(\log(p(t,x))\big), \; (t,x) \in\Omega, \\
\label{n-tc2}
p(0,x) &= \frac 1 n,\; x \in (0,1), \\
\label{n-bc2}
p(t,0) &= p(t,1) = 1, \;  t \in (0,T).
\end{align}
Then the representation result is summarised in the following proposition. 
\begin{Proposition}\label{prop:n-repre}
There exist a unique classical solution to  \eqref{n-log-pde}--\eqref{n-tc}--\eqref{n-bc} and a unique classical solution to \eqref{n-log-pde2}--\eqref{n-tc2}--\eqref{n-bc2}, denoted respectively by $e^n$ and $p^n$. It furthermore holds that
\begin{equation}
\label{n-candidate}
  e^n(t,x)=-\int_0^x\int_0^y p^n(T-t,z)  \, \mathrm{d} z\, \mathrm{d} y + x\int_0^1\int_0^y p^n(T-t,z)  \, \mathrm{d} z\, \mathrm{d} y,\; (t,x) \in \overline \Omega_T.
\end{equation}
\end{Proposition}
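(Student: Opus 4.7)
The plan is to first construct $p^n$ by combining a weak-solution argument with an interior regularity upgrade, then to define $e^n$ via the right-hand side of \eqref{n-candidate} and verify directly that it solves \eqref{n-log-pde}--\eqref{n-tc}--\eqref{n-bc}, and finally to deduce uniqueness from a comparison principle. The whole point of the approximation is that replacing the terminal/initial datum $0$ by a strictly positive constant turns the degenerate logarithmic diffusion into a uniformly parabolic problem amenable to classical theory.

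For $p^n$, the datum $g\equiv 1/n$ lies in $H^1((0,1))\cap \L^\infty((0,1))$, so \cite{salvarani2009asymptotic} yields a weak solution with boundary value $c=1$. The constants $1/n$ and $1$ are respectively a sub- and a super-solution, so the parabolic maximum principle forces $1/n\le p^n\le 1$ on $\overline\Omega_T$. Rewriting \eqref{n-log-pde2} in non-divergence form
\[
\partial_t p^n = \frac{1}{2p^n}\partial_{xx}p^n - \frac{(\partial_x p^n)^2}{2(p^n)^2},
\]
the coefficient $1/(2p^n)\in[1/2,n/2]$ is bounded above and bounded away from zero, so the equation is uniformly parabolic on the range of $p^n$; standard interior Schauder and Krylov--Safonov estimates then upgrade $p^n$ to $C^{1,2}(\Omega_T)$, and the monotonicity of $\log$ gives uniqueness in both the weak and classical classes.

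Given $p^n$, set $\tilde e^n$ equal to the right-hand side of \eqref{n-candidate}. Direct differentiation yields $\partial_{xx}\tilde e^n(t,x)=-p^n(T-t,x)\in[-1,-1/n]$, so $\log(-\partial_{xx}\tilde e^n)$ is smooth. Differentiating in $t$, using \eqref{n-log-pde2}, and integrating twice in $x$ via Fubini gives
\[
2\partial_t\tilde e^n(t,x)=\log p^n(T-t,x)+(x-1)\log p^n(T-t,0)-x\log p^n(T-t,1),
\]
and the boundary conditions $p^n(\cdot,0)=p^n(\cdot,1)=1$ annihilate the last two terms, leaving $2\partial_t\tilde e^n=\log(-\partial_{xx}\tilde e^n)$, which is \eqref{n-log-pde}. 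The conditions $\tilde e^n(t,0)=\tilde e^n(t,1)=0$ are immediate by inspection, and $\tilde e^n(T,x)=-x^2/(2n)+x/(2n)=e_\infty(x)/n$ yields \eqref{n-tc}. Uniqueness of the classical solution then follows from \Cref{lem:comp} applied with $c=1/n$: since $\partial_{xx}\tilde e^n\le-1/n$, any $C^{1,2}$ solution with the same boundary and terminal data satisfies the uniformly elliptic reformulation of \eqref{transform-pde} with the infimum taken over $a\ge 1/n$, and comparison in both directions gives equality.

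The main obstacle is the regularity upgrade for $p^n$ from weak to classical: the logarithmic diffusion equation is genuinely degenerate as $p^n\to 0$, and the sole function of the parameter $n$ in the approximation is to enforce the strict positivity $p^n\ge 1/n$ that restores uniform parabolicity and allows the classical machinery to apply. A minor technical caveat is the corner mismatch at $(0,0)$ and $(0,1)$ between the initial datum $1/n$ and the boundary value $1$, which precludes regularity up to those corners but is harmless in the interior---and thus for \eqref{n-candidate}---as well as in the weak formulation that produces $p^n$ in the first place.
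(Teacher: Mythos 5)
Your proposal is correct and follows essentially the same route as the paper: obtain a strictly positive classical solution $p^n$ of \eqref{n-log-pde2}--\eqref{n-tc2}--\eqref{n-bc2}, define $e^n$ by the double-integral formula, and verify \eqref{n-log-pde}--\eqref{n-tc}--\eqref{n-bc} by differentiating under the integral, with the boundary terms killed by $p^n(\cdot,0)=p^n(\cdot,1)=1$ exactly as in your displayed identity. The only divergence is cosmetic: the paper invokes \cite[Proposition 2.9]{davis1996some} directly for classical well-posedness and positivity of $p^n$, whereas you rebuild this via the weak solution of \cite{salvarani2009asymptotic}, the barriers $1/n$ and $1$, and a Schauder/Krylov--Safonov bootstrap, which is a legitimate (if longer) substitute. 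One small imprecision: for uniqueness of $e^n$ you should apply \Cref{lem:comp} with $c=0$ rather than $c=1/n$, since an arbitrary competing classical solution is only known to satisfy $\partial_{xx}e<0$, hence the unconstrained Legendre-transform form $F_0(\partial_{xx}e)=\log(-\partial_{xx}e)$, and nothing a priori forces its optimal control above $1/n$; the lemma as stated covers $c=0$, so this is immediately repaired.
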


\begin{proof}
As the initial and boundary conditions are strictly positive, by the maximum principle, see for instance \citeauthor*{davis1996some} \cite[Proposition 2.9]{davis1996some}, the PDE \eqref{n-log-pde2}--\eqref{n-tc2}--\eqref{n-bc2} is well-defined in the classical sense and its unique classical solution $p^n>0$ on $\Omega_T$. Define then $e^n: \overline\Omega_T\longrightarrow\R$ by
\[
  e^n(t,x)\coloneqq -\int_0^x\int_0^y p^n(T-t,z)  \, \mathrm{d} z\, \mathrm{d} y + x\int_0^1\int_0^y p^n(T-t,z)  \, \mathrm{d} z\, \mathrm{d} y.
  \]
  It remains to verify that $e^n$ is the unique classical solution to  \eqref{n-log-pde2}--\eqref{n-tc2}--\eqref{n-bc2}.
  A straightforward computation yields $e^n(T,\cdot)=e_\infty/n$, $e^n(\cdot,0)=e^n(\cdot,1)=0$ and  $\partial_{xx}e^n(t,x)=-p^n(T-t,x)$. Furthermore, one has by Fubini's theorem
\begin{align*}
  \partial_te^n(t,x)&=\int_0^x\int_0^y p^n_t(T-t,z)  \, \mathrm{d} z\, \mathrm{d} y - x\int_0^1\int_0^y p^n_t(T-t,z)   \, \mathrm{d} z\, \mathrm{d} y\\
  &=\frac 1 2 \int_0^x\int_0^y  \partial_{xx}\big(\log(p^n(T-t,z))\big)  \, \mathrm{d} z\, \mathrm{d} y -\frac x 2  \int_0^1\int_0^y  \partial_{xx}\big(\log(p^n(T-t,x))\big)  \, \mathrm{d} z\, \mathrm{d} y\\
   &=\frac 1 2 \int_0^x  \partial_x\big(\log(p^n(T-t,y))\big)  \, \mathrm{d} y -\frac x 2   \int_0^1 \partial_x\big(\log(p^n(T-t,y))\big)  \, \mathrm{d} y\\
   &=\frac 1 2  \log\big(p^n(T-t,x)\big) = \frac 1 2  \log\big(-\partial_{xx}e^n(t,x)\big), \; (t,x) \in \Omega_T,
\end{align*}
which implies that $e^n$ is a classical solution to \eqref{n-log-pde}--\eqref{n-tc}--\eqref{n-bc}. \qed
\end{proof}

\medskip
Next, we let $n\longrightarrow\infty$ in \Cref{n-candidate}.
\begin{Proposition}\label{prop:repre}
With the notation of {\rm\Cref{prop:n-repre}}, $n\longmapsto e^n(t,x)$ and $n\longmapsto p^n(t,x)$ are non-increasing and convergent for every $(t,x)\in \overline \Omega_T$. In particular, $e^n$ converges pointwise to $e$ and 
\[
e(t,x)=-\int_0^x\int_0^y \tilde p(T-t,z)  \, \mathrm{d} z\, \mathrm{d} y + x\int_0^1\int_0^y \tilde p(T-t,z)  \, \mathrm{d} z\, \mathrm{d} y,\; (t,x) \in \overline \Omega_T,
\]
where $e$ is the classical solution to \eqref{log-pde}--\eqref{tc}--\eqref{bc} and $\tilde p$ is the pointwise limit of $(p^n)_{n\in\N^\smallfont{\star}}$.  
\end{Proposition}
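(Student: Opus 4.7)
My plan is to first establish monotonicity and pointwise convergence of the two sequences $(p^n)_{n\in\N^\star}$ and $(e^n)_{n\in\N^\star}$, and then identify $\lim_n e^n$ with $e$ by exploiting the translation invariance of \eqref{log-pde} in a comparison argument based on \Cref{lem:comp}.

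For $(p^n)$, I would invoke the classical comparison principle associated to \cite[Proposition 2.9]{davis1996some}: applied to $p^n$ and $p^{n+1}$, which share the boundary condition $1$ but satisfy $p^{n+1}(0,\cdot) = 1/(n+1) \leq 1/n = p^n(0,\cdot)$, this yields $p^{n+1} \leq p^n$ on $\overline{\Omega}_T$. Combined with the parabolic maximum principle bounds $1/n \leq p^n \leq 1$, the sequence $(p^n(t,x))$ is monotone and bounded and hence converges pointwise to some $\tilde{p}(t,x) \in [0,1]$. To transfer monotonicity to $(e^n)$, I would rewrite \eqref{n-candidate} via Fubini as
\[
e^n(t,x) = \int_0^1 G(x,z)\, p^n(T-t,z) \, \mathrm{d} z, \qquad G(x,z) \coloneqq \min(x,z)\big(1-\max(x,z)\big) \geq 0.
\]
This representation immediately yields $e^{n+1} \leq e^n$, supplies the uniform bound $e^n(t,x) \leq \int_0^1 G(x,z)\, \mathrm{d} z = e_\infty(x) \leq 1/8$, and shows via $\partial_{xx} e^n(t,x) = -p^n(T-t,x)$ that each $e^n$ is concave in $x$. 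Dominated convergence then produces pointwise convergence of $e^n$ to the integral of $\tilde p$ as in the statement.

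The remaining task is to identify this limit with $e$. The key observation is that \eqref{log-pde} is invariant under constant shifts: if $e$ solves it, then so does $e + c$ for every $c \in \R$. I plan to sandwich $e^n$ between $e$ and $e + 1/(8n)$ via \Cref{lem:comp} applied with $c = 0$. On one side, $e$ and $e^n$ are concave, bounded, classical (hence continuous) solutions of \eqref{log-pde} whose boundary values on $\{0,1\}$ coincide and whose terminal values satisfy $0 = e(T,\cdot) \leq e_\infty(\cdot)/n = e^n(T,\cdot)$, so \Cref{lem:comp} yields $e \leq e^n$. On the other side, $e + 1/(8n)$ is concave, bounded, and solves \eqref{log-pde}, with boundary and terminal values dominating those of $e^n$ (since $\sup_{x \in [0,1]} e_\infty(x) = 1/8$), so \Cref{lem:comp} gives $e^n \leq e + 1/(8n)$.

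Combining these estimates produces $|e^n - e| \leq 1/(8n)$, so $e^n \to e$ uniformly on $\overline{\Omega}_T$, and equating the two descriptions of the limit yields the announced representation. I expect the main subtlety to lie in checking that \Cref{lem:comp} applies in both comparisons; this reduces to verifying concavity and boundedness of $e^n$, both of which are guaranteed by the representation formula derived in the first step, so no genuinely new ingredient is needed beyond what has already been built up.
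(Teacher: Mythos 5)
Your proposal is correct, and it reaches the conclusion by a genuinely different route than the paper. The paper proves monotonicity of $(e^n)_{n}$ directly, by comparing the terminal data $e_\infty/n$ via a generalisation of \Cref{lem:comp}, and then identifies the limit with $e$ through the stability theorem for viscosity solutions \cite[Remark 6.3]{crandall1992user}; separately, it proves monotonicity of $(p^n)_n$ by hand, writing $\log u-\log v=a(u-v)$ with $a(t,x)=\int_0^1(u\xi+v(1-\xi))^{-1}\,\mathrm{d}\xi$ and invoking the \emph{linear} parabolic maximum principle. You instead start from $(p^n)_n$, deduce monotonicity and the bound $e^n\le e_\infty$ for $(e^n)_n$ through the Green's function rewriting $e^n(t,x)=\int_0^1 G(x,z)p^n(T-t,z)\,\mathrm{d}z$ with $G(x,z)=\min(x,z)(1-\max(x,z))\ge 0$ (which is a correct Fubini computation), and identify the limit by the sandwich $e\le e^n\le e+1/(8n)$ obtained from \Cref{lem:comp} and the shift-invariance of \eqref{log-pde}. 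The sandwich is a nice improvement: it bypasses viscosity stability entirely and yields the explicit uniform rate $\|e^n-e\|_\infty\le 1/(8n)$, which the paper's argument does not provide. The only place where you lean on an uncited step that the paper actually works out in detail is the comparison $p^{n+1}\le p^n$: \cite[Proposition 2.9]{davis1996some} as used in \Cref{prop:n-repre} gives well-posedness and positivity, and the ordering of the two solutions is exactly what the paper's linearisation argument is there to establish (it needs the uniform lower bound $p^{n+1}\ge 1/(n+1)$ to make $a$ bounded). So you should either reproduce that linearisation or point to a comparison theorem for the logarithmic diffusion equation with ordered initial data and equal boundary data; the claim itself is true and standard under the positive lower bound, so this is a matter of filling in a routine detail rather than a gap in the argument.
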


\begin{proof}
Note that $e^n$ is the unique bounded viscosity solution to \eqref{n-log-pde}--\eqref{n-tc}--\eqref{n-bc}, thus the comparison principle---from a straightforward generalisation of \Cref{lem:comp}---yields the required monotonicity for $(e^n(t,x))_{n\ge 1}$ and thus its pointwise convergence. Furthermore, the stability of viscosity solutions, see \cite[Remark 6.3]{crandall1992user} tells us that its pointwise limit must be $e$ and the convergence is even uniform on $\overline \Omega_T$. 

\medskip

Next, consider $(p^n)_{n\ge 1}$. It follows from \citeauthor*{davis1996some} \cite[Remark 3.1]{davis1996some}, that 
 there exists some $\eps_n>0$ such that $\eps_n<p^n(t,x)\le \max\{1, \|e_\infty\|_\infty/n\}=1$ for all $(t,x) \in \overline\Omega_T$ and $n$ large enough. Fix a sufficiently large $n$ and write for notational simplicity $p^n\equiv u$ and $p^{n+1}\equiv v$. We define the function $a:\overline\Omega_T \longrightarrow \R$ by
\[
a(t,x)\coloneqq \int_0^1 \frac{1}{u(t,x)\xi  + v(t,x)(1-\xi)}  \, \mathrm{d} \xi,
\]
where it follows that $1 \le a(t,x)\le 1/\eps_{n+1}$. Set $w\coloneqq u-v$. Then it holds that
\[
2\partial_t w= \partial_{xx}(aw) = aw_{xx}+2a_xw_{x} +a_{xx}w \mbox{ on } \Omega_T,
\]
and $w(0,\cdot)> 0$, $w(\cdot,0)=w(\cdot,1)=0$. We deduce from the linear maximum principle (see \citeauthor*{imbert2013introduction}, \cite[Theorem 2.2.4]{imbert2013introduction}) that $w\ge 0$. Therefore, the pointwise limit of $(p^n)_{n\in\N^\smallfont{\star}}$ exists and can be denoted by $\tilde p$. We may thus conclude the proof by the dominated convergence theorem. \qed
\end{proof}

\medskip
We are now in a position to prove the main result.

\medskip
\begin{proof}[Proof Theorem \ref{thm:main}]
The existence and uniqueness of a classical solution come from \Cref{prop:c21}, which also establishes the concavity in $x$. Combining the uniqueness with a straightforward verification for $e^\prime(t,x)\eqqcolon e(t,1-x)$, we deduce $e=e^\prime$ and thus the symmetry with respect to $x=1/2$. The bounds are shown in \Cref{lem:bounded} from the control representation\footnote{Alternatively, it can be verified that 0 and $e_\smallfont{\infty}$ are sub- and super-solutions, respectively.}.

\medskip We next prove \Cref{candidate}. As $p^n$ is a classical solution to \eqref{n-log-pde2}--\eqref{n-tc2}--\eqref{n-bc2}, it is also the unique weak solution to \eqref{n-log-pde2}--\eqref{n-tc2}--\eqref{n-bc2}, namely, 
 \[ 
\int_0^T\int_0^1 \big(p^n(t,x)\partial_t\phi -\frac 1 2 \partial_x\big(\log(p^n(t,x))\big)\partial_x\phi(t,x) \big)  \, \mathrm{d} x\, \mathrm{d} t + \int_0^1 \frac{\phi(x,0)}{n}  \, \mathrm{d} x  =0
\]
holds for  all $\phi\in H^1(\Omega_T)\cap C(\overline \Omega_T)$ vanishing at $t=T$, $x=0$ and $x=1$; see \cite[Theorem 2.1]{salvarani2009asymptotic} for the uniqueness of the weak solution.
 Adopting the arguments of \citeauthor*{GS2007} \cite[Section 4]{GS2007}, $(p^n)_{n\in\N^\smallfont{\star}}$ converges weakly in $\L^2(\Omega_T)$ to the unique weak solution $p$ of \eqref{log-pde2}--\eqref{tc2}--\eqref{bc2}. As $\L^2(\Omega_T)$ is a reflexive Banach space, by means of Mazur's lemma, there exists a function $N:\N\longrightarrow\N$ and a sequence of finite sets $\{\alpha (n)_{k} :k\in\{n,\ldots ,N(n)\}\}\subset \R_+$ satisfying 
$\sum _{k=n}^{N(n)}\alpha (n)_{k}=1$ such that 
\[
\lim_{n\to\infty} q^n = p, \mbox{ in } \L^2(\Omega_T),\; \mbox{with } q^n\coloneqq  \sum _{k=n}^{N(n)}\alpha (n)_{k}p^k.
\]
By construction, $(q^n)_{n\in\N^\smallfont{\star}}$ also converges pointwise to $\tilde p$. Hence, $p=\tilde p$ and the desired result \eqref{candidate} follows. Finally, we can deduce the monotonicity of $e$ in time. We established in the proof of \Cref{prop:repre} that $p^n\le 1$, and hence we have $p\le 1$.
From the representation formula, $2\partial_te = \log(-\partial_{xx}e) = \log(p(T-\cdot, \cdot)) \le 0$, as desired.
 \qed
\end{proof}
\medskip

\subsection{Comparison results}

We establish here two comparison results. We will use the second estimate further in Section \ref{sec:asympt} to give asymptotic bounds close to the time boundary.

\begin{Proposition}
\label{lem:qv}
For any $(t,x)\in[0,T]\times[0,1]$, we have
\[
p(t,x)\leq \bar p(t,x), \; \text{\rm or equivalently,} \;
\sigma^\star(t,x)\geq \bar\sigma(t,x),
\]
where $\bar\sigma(t,x)\coloneqq \sin(\pi x)/(\pi\sqrt{T-t})$ is the solution from {\rm\cite{backhoff2023most}}
and $\bar p \coloneqq 1/\bar\sigma^2 = - \bar e_{xx}$, with $\bar e$ given in \eqref{eqn:bare}, and
$p$ the unique weak solution to \eqref{log-pde2}--\eqref{tc2}--\eqref{bc2}.
\end{Proposition}
\begin{proof}
Recall the smooth maps $(p^n)_{n\in\N^\smallfont{\star}}$ from \eqref{n-log-pde2}--\eqref{n-tc2}--\eqref{n-bc2}. Define now $q^n(t,x)\coloneqq -p^n(T-t,x)$. To prove the above claim, it is enough to show that
\[
w^n(t,x)\coloneqq \log(-q^n(t,x))\leq -2\log\big(\hat\sigma(t,x)\big)\eqqcolon w(t,x),\; (t,x)\in[0,T]\times[0,1], 
\]
given the convergence of $p^n$ to $p$. Notice then that by immediate computations, $w^n$ is a classical solution to 
\begin{equation}\label{PDE:quadvar}
-\partial_tw^n(t,x)-\mathrm{e}^{-w^\smallfont{n}(t,x)}\partial_{xx}w(t,x)=0,\; (t,x)\in[0,T)\times(0,1),\; w^n(T,\cdot)=-\log(n),\; w^n(\cdot,0)=w^n(\cdot,1)=0.
\end{equation}
Since $w(\cdot,0)=w(\cdot,1)=+\infty$, $w$ is a classical super-solution of the above PDE. Notice also that $w$ is strictly convex in $x$. We now claim that the desired inequality follows from a comparison theorem between smooth sub-solutions and smooth strictly convex (in $x$) super-solutions to \eqref{PDE:quadvar}. Indeed, when all functions are smooth, we can argue by contradiction as follows. Suppose that the inequality is not true, and follow standard arguments ensuring that we can then find an interior point $(t_o,x_o)\in (0,T)\times (0,1)$ such that there is some $\delta>0$ with
\[
w^n(t_o,x_o)-w(t_o,x_o)\geq \delta,\; \partial_tw^n(t_o,x_o)=\partial_tw(t_o,x_o),\; \partial_{xx}w^n(t_o,x_o)\leq\partial_{xx}w(t_o,x_o).
\]
Using the fact that $w^n$ is a solution (and thus a sub-solution) to \eqref{PDE:quadvar} and $w$ is a super-solution to \eqref{PDE:quadvar} we then deduce that
\[
\mathrm{e}^{w^\smallfont{n}(t_\smallfont{o},x_\smallfont{o})-w(t_\smallfont{o},x_\smallfont{o})} \partial_{xx}w(t_o,x_o)\leq  \partial_{xx}w^n(t_o,x_o).
\]
This is impossible since by positivity of $\partial_{xx}w(t_o,x_o)$, we would then have 
\[
 \partial_{xx}w(t_o,x_o)< \mathrm{e}^{w^\smallfont{n}(t_\smallfont{o},x_\smallfont{o})-w(t_\smallfont{o},x_\smallfont{o})} \partial_{xx}w(t_o,x_o)\leq  \partial_{xx}w^n(t_o,x_o).
 \]
 \qed
\end{proof}

\begin{Proposition}\label{prop:compare}
Let $p_1$ be a weak solution to \eqref{log-pde2}--\eqref{tc2}--\eqref{bc2}
and $p_2 \in C^{1,2}(\Omega_T) \cap C(\overline{\Omega}_T)$, non-decreasing in $t$,
that satisfies \eqref{log-pde2}--\eqref{tc2}, $0 < p_2(t,0) \le 1$ and $0 < p_2(t,1) \le 1$ for $t \in (0,T]$.
Then we have $p_2\le p_1$ in $\overline{\Omega}_T$.
\end{Proposition}
\begin{proof}
We first show the corresponding result for $p_1$ that satisfies \eqref{n-tc2} instead of \eqref{tc2}, so that $p_1 \in C^{1,2}(\Omega_T)$ also.
We suppress the dependence on $n$ for brevity.
We then have
\[
2 \partial_t (p_2-p_1) = \partial_{xx} \log (p_2/p_1), \; (p_2/p_1)(0,x) = 0, \;  (p_2/p_1)(t,0) \le 1, \;  (p_2/p_1)(t,1) \le 1.
\]
Consider now $q_i \coloneqq p_i {\rm e}^{\varepsilon t}$ for some $\varepsilon>0$, $i\in\{1,2\}$, so that
\[
2 {\rm e}^{- \varepsilon t} \partial_t (q_2-q_1) - \varepsilon {\rm e}^{-\varepsilon t} (q_2-q_1)  = \partial_{xx} \log (q_2/q_1), \; (q_2/q_1)(0,x) = 0, \;  (q_2/q_1)(t,0) \le 1, \;  (q_2/q_1)(t,1) \le 1.
\]
We now show that no interior local maximum of $q_2/q_1$ can have $q_2/q_1 > 1$. At such a point, 
we would have $q_2 - q_1 > 0$, $ \partial_{xx} \log (q_2/q_1)\le 0$, and, using the assumption $\partial_t p_2 \ge 0$, from which $\partial_t q_2 \ge 0$
\[
\partial_t (q_2-q_1) = q_1 \left(\frac{\partial_t q_2}{q_1}- \frac{\partial_t q_1}{q_1}\right) \ge q_1 \left(\frac{\partial_t q_2}{q_2}- \frac{\partial_t q_1}{q_1}\right)
= q_2 \partial_t \left(\frac{q_2}{q_1} \right) \ge 0,
\]
leading to a contradiction.
By letting $\varepsilon \longrightarrow 0$, we deduce that $p_2 \le p_1$, for fixed arbitrary $n$.
Finally, we let $n\longrightarrow \infty$ and use the pointwise convergence established in the proof of \Cref{thm:main}.\ref{it_int} to conclude.
 \qed
\end{proof}

\subsection{Convergence to the stationary solution}

Thanks to the representation theorem, we may obtain an estimate of the decay rate to the stationary solution. More precisely, we set $e\equiv e^T$ to emphasise the dependency of  \eqref{log-pde}--\eqref{tc}--\eqref{bc} on $T$ and rewrite the integral representation of $e^T$ as follows
\begin{align*}
 e^T(t,x) &= -\int_0^x  p(T-t,z)  \, \mathrm{d} z \int_z^x  \, \mathrm{d} y + x\int_0^1  p(T-t,z)  \, \mathrm{d} z \int_z^1  \, \mathrm{d} y \\
  & =(1-x)\int_0^x z p(T-t,z)  \, \mathrm{d} z + x\int_x^1 (1-z)p(T-t,z)  \, \mathrm{d} z.  
   \end{align*}
Then we have the following result.
\begin{Corollary}
Let $e^T$ be the solution to {\rm\Cref{log-pde}--\eqref{tc}--\eqref{bc}}. Then it holds for every even number $\alpha\in \N$,
\[ 
|e^T(t,x)-e_\infty(x)| \le   x(1-x)\exp\bigg(\frac{-(\alpha-1)(T-t)}{\pi\alpha^2}\bigg),\; \forall (t,x)\in \overline\Omega_T.
\]
\end{Corollary}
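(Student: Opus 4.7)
The plan is to combine the integral representation of $e^T$ from \Cref{thm:main} with an exponential decay estimate for the integral average of $1-p$, where $p$ solves \eqref{log-pde2}--\eqref{tc2}--\eqref{bc2}. Setting $\tau\coloneqq T-t$, the starting point is that $e_\infty$ equals the same representation formula with $p\equiv 1$ (the stationary solution), so subtraction gives
\[
e_\infty(x)-e^T(t,x)=\int_0^1 G(x,z)\bigl(1-p(\tau,z)\bigr)\, \mathrm{d} z,
\]
where $G(x,z)\coloneqq (1-x)z\mathbf{1}_{\{z\le x\}}+x(1-z)\mathbf{1}_{\{z>x\}}$ is the Dirichlet Green's function of $-\partial_{xx}$ on $(0,1)$. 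Since $p\le 1$ on $\overline\Omega_T$ (established in the proof of \Cref{thm:main}), this integrand is non-negative, hence $|e^T-e_\infty|=e_\infty-e^T$.

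Next, I would use the pointwise bound $G(x,z)\le x(1-x)$---which holds because $(1-x)z\le (1-x)x$ for $z\le x$ and $x(1-z)\le x(1-x)$ for $z\ge x$---to factor out the $x$-dependence:
\[
\bigl|e^T(t,x)-e_\infty(x)\bigr|\le x(1-x)M(\tau),\quad M(\tau)\coloneqq \int_0^1\bigl(1-p(\tau,z)\bigr)\, \mathrm{d} z.
\]
The problem thus reduces to proving the exponential bound $M(\tau)\le \exp(-(\alpha-1)\tau/(\pi\alpha^2))$ for each even $\alpha\in\N$.

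For this decay estimate, my plan is to test \eqref{log-pde2} against the weight $\phi_\alpha(x)\coloneqq\sin^\alpha(\pi x)$, which for even $\alpha\ge 2$ is smooth on $[0,1]$ and vanishes at the endpoints. Defining $E_\alpha(\tau)\coloneqq\int_0^1\phi_\alpha(x)(1-p(\tau,x))\, \mathrm{d} x$, differentiating in $\tau$, and integrating by parts twice, using that both $\phi_\alpha$ and $\log p(\tau,\cdot)$ vanish at $x=0$ and $x=1$, yields
\[
E_\alpha'(\tau)=-\frac{1}{2}\int_0^1\phi_\alpha''(x)\log p(\tau,x)\, \mathrm{d} x,\quad \phi_\alpha''=\alpha\pi^2\bigl[(\alpha-1)\sin^{\alpha-2}(\pi x)-\alpha\sin^\alpha(\pi x)\bigr].
\]
Applying the concavity inequality $-\log p\ge 1-p$ to the two terms in $\phi_\alpha''$, combined with the Wallis identity $\int_0^1\sin^\alpha(\pi x)\, \mathrm{d} x=\tfrac{\alpha-1}{\alpha}\int_0^1\sin^{\alpha-2}(\pi x)\, \mathrm{d} x$, should produce a Grönwall-type differential inequality $E_\alpha'(\tau)\le -\mu_\alpha E_\alpha(\tau)$ with the prescribed rate $\mu_\alpha=(\alpha-1)/(\pi\alpha^2)$; a comparison between $E_\alpha$ and $M$ exploiting the symmetry of $1-p$ about $x=1/2$ then transfers the decay to $M(\tau)$.

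The main obstacle is this decay step: the nonlinear term $\partial_{xx}\log p$ resists direct spectral analysis, and the sign change of $\phi_\alpha''$ for $\alpha\ge 3$ obstructs a naive Rayleigh-quotient estimate. The prefactor $1/\pi$ in the rate arises from the change of variables $y=\pi x$ in the Wallis normalisation, while $(\alpha-1)/\alpha^2$ reflects the balance between the $\sin^{\alpha-2}$ and $\sin^\alpha$ contributions in $\phi_\alpha''$; maximising over $\alpha$ gives the sharpest rate $1/(4\pi)$ at $\alpha=2$.
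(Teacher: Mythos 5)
Your first reduction is sound and in fact mirrors (and slightly streamlines) what the paper does: the paper also subtracts the stationary representation with $p_\infty\equiv 1$, writes $e_\infty(x)-e^T(t,x)=(1-x)\int_0^x z(1-p(T-t,z))\,\mathrm{d}z+x\int_x^1(1-z)(1-p(T-t,z))\,\mathrm{d}z$, and extracts the factor $x(1-x)$ (via H\"older with exponents $\alpha,\beta$ rather than your pointwise bound $G(x,z)\le x(1-x)$). The decisive difference is what happens next. The paper does \emph{not} prove the decay of $p$ towards $1$; it invokes \cite[Theorem 4.1]{salvarani2009asymptotic}, which gives $\|p(t,\cdot)-1\|_{\L^\alpha([0,1])}\le\exp(-C(\alpha-1)t/\alpha^2)$ for even $\alpha$, with $C$ the Poincar\'e constant of $(0,1)$. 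The proof of that cited result tests the equation against $(p-1)^{\alpha-1}$, a power of the solution itself: one integration by parts produces $-\frac{\alpha(\alpha-1)}{2}\int(p-1)^{\alpha-2}(\partial_xp)^2/p$, which is a perfect gradient square of $(p-1)^{\alpha/2}$ up to the factor $1/p\ge1$, and Poincar\'e closes the Gr\"onwall loop. Your replacement of this by a \emph{fixed} spatial weight $\phi_\alpha=\sin^\alpha(\pi x)$ destroys exactly the structure that makes the argument work.

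Concretely, the gap is in your decay step, and you have correctly located it but not resolved it. After two integrations by parts you face $E_\alpha'(\tau)=\tfrac12\int_0^1\phi_\alpha''(-\log p)\,\mathrm{d}x$ with $\phi_\alpha''=\alpha\pi^2[(\alpha-1)\sin^{\alpha-2}(\pi x)-\alpha\sin^\alpha(\pi x)]$, which is \emph{positive} near the endpoints (where $\sin^2(\pi x)<(\alpha-1)/\alpha$) and negative in the interior. On the region where $\phi_\alpha''>0$ you would need an \emph{upper} bound on $-\log p$ in terms of $1-p$; the concavity inequality $-\log p\ge 1-p$ points the wrong way, and no reverse bound $-\log p\le K(1-p)$ holds uniformly, since $p(\tau,\cdot)\downarrow 0$ in the interior as $\tau\downarrow 0$ (the initial datum of \eqref{tc2} is $0$) so $-\log p$ is unbounded. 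The Wallis identity balances the two terms of $\phi_\alpha''$ only against a \emph{constant} multiplier, not against the non-constant, unbounded weight $-\log p$. Finally, even granting $E_\alpha(\tau)\le E_\alpha(0)e^{-\mu_\alpha\tau}$, the transfer to $M(\tau)=\int_0^1(1-p)$ requires a lower bound $M\le CE_\alpha$, i.e.\ controlling the mass of $1-p$ near the endpoints where $\sin^\alpha(\pi x)$ vanishes; symmetry about $x=1/2$ gives no such control. To repair the proof, keep your reduction $|e^T(t,x)-e_\infty(x)|\le x(1-x)\|1-p(T-t,\cdot)\|_{\L^1([0,1])}\le x(1-x)\|1-p(T-t,\cdot)\|_{\L^\alpha([0,1])}$ and then either cite the $\L^\alpha$ decay of \cite{salvarani2009asymptotic} as the paper does, or reproduce its proof by testing against $(p-1)^{\alpha-1}$ rather than a fixed trigonometric weight.
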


\begin{proof}
First, note that the stationary solution to \eqref{log-pde2}--\eqref{tc2} is $p_\infty(x)=1$. Similarly, denote by $p\equiv p^T$ the weak solution to \eqref{log-pde2}--\eqref{tc2}--\eqref{bc2}. By means of \cite[Theorem 4.1]{salvarani2009asymptotic},  there exists some $C>0$ such that for any even number $\alpha\in \N$
\[ 
\|p^T(t,\cdot)-p_\infty\|_{\L^\smallfont{\alpha}([0,1])} \le \exp\bigg(\frac{-C(\alpha-1)t}{\alpha^2}\bigg),\; \forall t\in [0,T].
\]
Inspection of the proof of \cite[Theorem 4.1]{salvarani2009asymptotic} reveals that $C$ is the sharp constant of Poincaré's inequality of the relevant domain, which for $[0,1]$ is 
known explicitly as $1/\pi$.
Hence, choosing $\beta$ such that $\frac 1 \alpha +  \frac 1 \beta=1$, H\"older's inequality allows to conclude
\begin{align*}
|e^T(t,x)-e_\infty(x)| &= \bigg |(1-x)\int_0^x z (p^T-p_\infty)(T-t,z)  \, \mathrm{d} z + x\int_x^1 (1-z) (p^T-p_\infty)(T-t,z)  \, \mathrm{d} z\bigg| \\
&\le (\beta+1)^{-1/\beta}x(1-x)\exp\bigg(\frac{-C(\alpha-1)(T-t)}{\alpha^2}\bigg)\big[x^{1/\beta}+(1-x)^{1/\beta}\big]  \\
&\le x(1-x)\exp\bigg(\frac{-C(\alpha-1)(T-t)}{\alpha^2}\bigg).
\end{align*}
\qed
\end{proof}

\subsection{Well-posedness for the optimal martingale}

In this section we provide the proof of \Cref{th:wellposedsde}. This requires the following technical result.
\begin{Lemma}
\label{lem:prop_sigma}
For all $\varepsilon>0$, 
there exist positive constants $C_{\varepsilon}$ and $D_\varepsilon>0$ such that
\begin{align}
\label{bounds}
1/\mathrm{\sqrt{e}} \le \sigma^\star(t,x) &\le C_\varepsilon,  \; \forall (t,x) \in [0,T-\varepsilon] \times [0,1], \\
|\sigma^\star(t,x) - \sigma^\star(t,y)| &\le D_\varepsilon |x-y|^{1/2}, \;
 \forall (t,x,y) \in [0,T-\epsilon] \times [0,1]^2.
\label{holder}
\end{align}
\end{Lemma}
\begin{proof}
Fix $\eps>0$. From \Cref{def:weak}.$(ii)$ and \Cref{thm:main}.$(ii)$, by the classical continuous embedding of $H^1$ into the space of $1/2$--H{\"o}lder continuous functions
(see, \emph{e.g.}, \cite[Theorem 4.12]{adams2003sobolev}), we have for some
$c_\varepsilon>0$ that
\begin{eqnarray}
\label{eq:holderp}
|p(t,x) - p(t,y)| \le c_\varepsilon |x-y|^{1/2},\; \forall (t,x,y) \in [0,T-\varepsilon] \times [0,1]^2.
\end{eqnarray}
We also have from \Cref{thm:main}.$(ii)$ that $\partial_{xx} e(t,x) = - p(T-t,x)$, where $e$ is a classical solution to \eqref{log-pde}. This can only be the case
if there exists $\tilde c_{\varepsilon}>0$ such that $-\partial_{xx} e(t,x) \ge \tilde c_{\varepsilon}$ for all $(t,x) \in [0,T-\varepsilon] \times [0,1]$.
Otherwise, there would be a sequence $(t_n,x_n)_{n\in\N}$, valued in $[0,T-\varepsilon] \times [0,1]$, with $(t_n,x_n) \longrightarrow (t_o,x_o) \in [0,T-\epsilon] \times [0,1]$ with
$p(t_n,x_n) \longrightarrow 0$ as $n\longrightarrow \infty$. Since $p$ assumes its positive boundary value \eqref{bc2}
continuously by virtue of \Cref{def:weak}.$(ii)$, we cannot have $x_o \in \{0,1\}$.
But if $x_0 \in (0,1)$, then $-\partial_{xx} e(t_n,x_n) = p(t_n, x_n) \longrightarrow 0$ implies by \eqref{log-pde} that  $|\partial_t e(t,x)(t_n,x_n)| \longrightarrow \infty$ as $n\longrightarrow \infty$, which is a contradiction to $e$ being a classical solution with locally bounded derivatives in the interior. 

\medskip
From $p(T-t,x)\ge c_{\varepsilon}$ and \eqref{eq:holderp} we can deduce \eqref{holder}.
Moreover, we get the upper bound in \eqref{bounds} taking $C_\varepsilon = 1/c_\varepsilon$.
Finally, we have from \Cref{prop:elliptic} that $\sigma^\star(t,x) \ge 1/\mathrm{\sqrt{e}}$ for all $x$ and $t$ and hence the lower bound in \eqref{bounds}.
\qed
\end{proof}

\medskip

\begin{proof}[Proof of Theorem \ref{th:wellposedsde}]
The uniqueness of solutions follows from \citeauthor*{yamada1971uniqueness} \cite{yamada1971uniqueness}, see \citeauthor*{karatzas1991brownian} \cite[Proposition 5.2.13 and Example 5.2.14]{karatzas1991brownian}, by the $1/2$--H{\"older} continuity proved in \Cref{lem:prop_sigma}, recalling \Cref{holder}.
The existence is then implied by \citeauthor*{gyongy1996existence} \cite[Theorem 2.4]{gyongy1996existence} (see also the revised version in \citeauthor*{gyongy2022existence} \cite[Theorem 2.1]{gyongy2022existence}, as well as \citeauthor*{gyongy2011note} \cite[Remark 1.1]{gyongy2011note}, or the earlier results of \citeauthor*{veretennikov1981strong} \cite{veretennikov1981strong}.).
Finally, the last statement of the theorem 
follows by the same arguments as for Lemma 5.1 at the start of Section 5 in \cite{backhoff2023most}.
\end{proof}

\section{Approximation scheme}\label{sec:numerics}

We discretise the PDE in the form \eqref{elliptic-pde} using $M\in\N^\star$ time points and a time step $k\coloneqq T/M$, as well as $N\in\N^\star$ spatial intervals of width $h\coloneqq 1/N$.
We write $v_n^m$ for the approximation to $w(m k, n h)$, for $n\in\{0,\ldots,N\}$, $m\in\{0,\ldots,M\}$. The boundary conditions are then for all $n\in\{0,\ldots,N\}$, $v_n^M \coloneqq  0$, and for all $m\in\{0,\ldots,M\}$, $v_0^m \coloneqq  0$, $v_N^m \coloneqq  0$.
We first introduce a regularised problem with strictly positive and bounded control set:
for any positive constant $d\ge 1/{\rm e}$, set $I^d\coloneqq  [1/{\rm e},d]$. 
Considering $w_d$ the solution of \eqref{elliptic-pde} with $\sup_{a \ge 1/{\rm e}}$ replaced by $\sup_{a \in [1/{\rm e},d]}$, the dominated convergence theorem ensures the pointwise convergence of $w^d$ to $w$.


\medskip
We will discuss both explicit and implicit time-stepping schemes. For the explicit finite difference scheme, which is understood backwards in time, with $v^M=0$, and for all $m\in\{1,\ldots,M\}$, $n\in\{1,\ldots,N-1\}$
\begin{equation}
\label{expl_scheme}
2 \frac{v_n^{m}-v_n^{m-1}}{k} = \inf_{a\in I^\smallfont{d}} \big\{(-a (A v^m)_n - \log a - 1\big\},
\end{equation}
where $v^m \coloneqq  (v_0^m,\ldots, v_N^m)$, and the matrix operator $A$ is defined row-wise for $n\in\{1,\ldots,N-1\}$ as $(A v^m)_n \coloneqq  (v_{n+1}^m - 2 v_{n}^m + v_{n-1}^m)/h^2$.

\medskip
The explicit scheme can be re-arranged as
\begin{equation}
\label{expl_scheme_2}
v_n^{m-1} =  \sup_{a \in  I^\smallfont{d}} \big\{ \pi(a)  v^m_{n+1} + (1-2\pi(a))  v^m_{n}   + \pi(a)  v^m_{n-1}   + k (\log a + 1)/2 \big\},
\end{equation}
for $\pi(a) \coloneqq  k a/ (2 h^2)$. If $k d/ h^2 \le 1$, $\pi(a)$ and $1-2\pi(a)$  are guaranteed to be non-negative for all $a \in I^d$ and are interpretable as transition probabilities. 
Therefore, defining a symmetric random walk by $\widehat{X}_{m} = \widehat{X}_{m-1} + h \xi_{m-1},$ where $\xi_m$ are i.i.d. with $\mathbb{P}[\xi_m = 1] = \mathbb{P}[\xi_m = -1] =  \pi(a)$, $\mathbb{P}[\xi_m = 0]= 1-2 \pi(a)$, and $0$ else, we have
\begin{equation*}
v_n^0 = \frac{1}{2} \sup_{\hat{a}} \bigg\{\sum_{j=0}^{\hat{\tau}-1} (\log(a_j) + 1) k\bigg\}, \; \hat{\tau} \coloneqq  \min\{j\in\N: \widehat{X}_{j} \in \{0,1\} \},
\end{equation*}
where $\hat{a} \coloneqq  (a_0,\ldots, a_{M-1})$ is an admissible discrete control process. 
By choosing $a_j=1/\mathrm{e}$ for all $j$, it is clear that $v^m$ is non-negative for all $m$. Moreover, $x (1-x)/2$ is a super-solution to the scheme, from which is follows that
$v_n^m \le x_n (1-x_n)/2$ for all $n$ and $m$.

\medskip
We now turn to the implicit scheme.
For all $m\in\{1,\ldots,M\}$, $n\in\{1,\ldots,N-1\}$, let
\begin{equation}
\label{scheme}
2 \frac{u_n^{m+1}-u_n^m}{k} =  \inf_{a \in I^\smallfont{d}} \big\{-a (A u^m)_n - \log a - 1\big\},
\end{equation}
where $u^m \coloneqq  (u_0^m,\ldots, u_N^m)$. This can be written as
\begin{equation}
\label{scheme_2}
 \inf_{a \in I^\smallfont{d}} \big\{ ((1- (k a)/2 A) u^m)_n - k (\log a +1)/2  \big\} = u_n^{m+1}.
\end{equation}
Using that $1- (k a)/2  A$ is a (strictly diagonally dominant) $M$-matrix, we have that the scheme is monotone and a similar argument to above gives the same bounds on the solution
as for the explicit scheme, without constraints on the time-step. Therefore, we can let $d \uparrow\infty$ and obtain monotone convergence of the discrete solution. 

\medskip
A standard calculation shows that the explicit and implicit scheme are consistent with the PDE. The framework by \citeauthor*{barles1991convergence} \cite{barles1991convergence} then implies convergence to the viscosity solution of the PDE as $k, h \downarrow 0$, maintaining $k d/h^2 \le 1$ in the case of the explicit scheme. We will focus on the implicit scheme from now on for its unconditional stability, which is here especially useful due to the arbitrarily large control values close to the terminal time, \emph{i.e.}, for convergence we need to choose arbitrarily large $d$.

\medskip
The system \eqref{scheme_2}, combined with boundary conditions, is a nonlinear finite dimensional system of equations, which can be solved by policy iteration: 
starting from an initial guess $u^{(0)}$, define for each $i\in\N$, $a^{(i)}_n \coloneqq  \min\big\{- 1/(A u^{(i)})_n , d\big\},$ and then solve the \emph{linear} system
\begin{equation*}
\label{it_scheme}
u_n^{(i+1)} - k/2  \big(a^{(i)}_n (A u^{(i+1)})_n + \log a^{(i)}_n + 1\big) = u_n^{m+1}.
\end{equation*}
This iteration converges super-linearly by standard results (see \citeauthor*{bokanowski2009some} \cite{bokanowski2009some}). In practice, $2$ or $3$ iterations are sufficient for high accuracy. \Cref{fig:uxx-vol} shows the second derivative of the value function, $\partial_{xx}e$, and the optimal volatility $\sigma^\star$ as function of $x$ for different $t$,
with $T=1$. The numerical solution was computed with $M=N=1000$, $d=10^6$.

\begin{figure}[ht!]
\hspace{-0.2 cm}
\includegraphics[width=0.52\columnwidth]{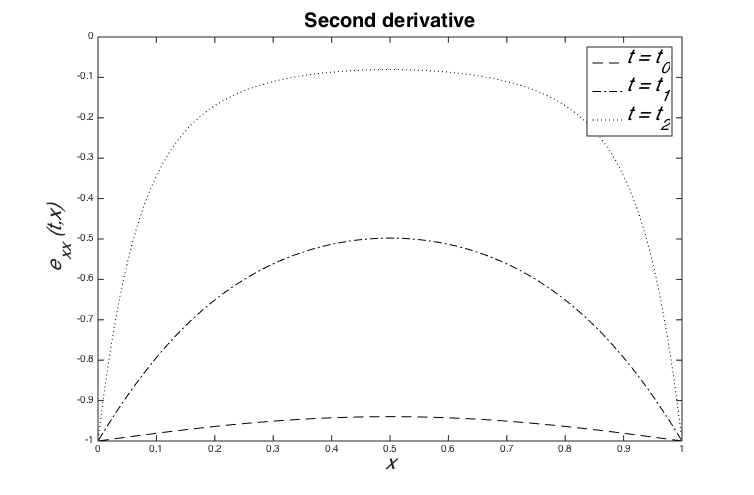}
\hspace{-0.8 cm}
\includegraphics[width=0.52\columnwidth]{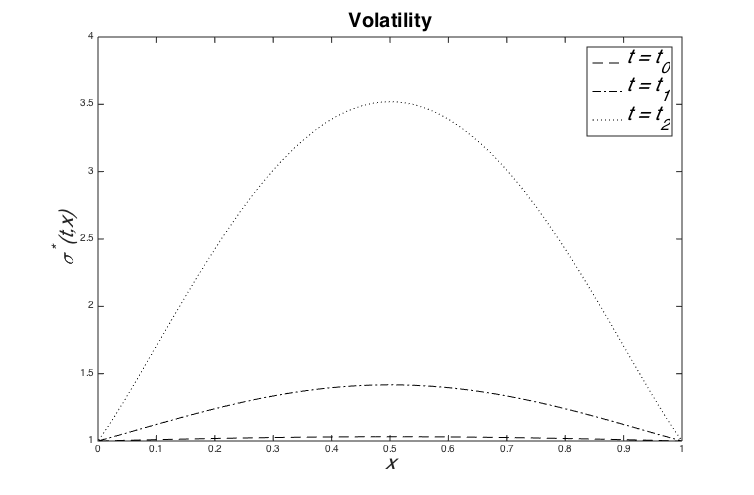}
\caption{\small Second derivative $\partial_{xx}e$ of value function and optimal volatility $\sigma^\star$ for $t_0=0.5$, $t_1=0.9$, $t_2=0.99$.}
\label{fig:uxx-vol}
\end{figure}

\section{Asymptotics near the time boundary}
\label{sec:asympt}

Here, we investigate the behaviour of the solution as $t\longrightarrow T$ by the method of of matched  asymptotic expansions (see, \emph{e.g.}, \cite[Chapter 5]{hinchperturbation}). 
For small values of $T-t$, the solution domain can be separated into 
an `inner region' near each of the boundaries at $x=0,1$,  and an  `outer region' away from the boundaries. We construct separate approximations (which happen to be exact solutions of the PDE but none of which satisfies all the necessary boundary conditions), and join them by matching 
in  overlap regions between the two inner regions and the outer region. We can use these separate solutions to construct a composite expansion which is a uniform approximation in the whole domain.

\medskip
It is more convenient to work with $p$ defined by $p(t,x)\coloneqq -\partial_{xx}e(T-t,x)$, which satisfies
\begin{align}
\label{p_pde}
2 \partial_t p = \partial_x (\partial_x p/p), \; p(0,x) = 0, \; p(t,0) = p(t,1) = 1.
\end{align}
Note that this is now a forward equation with initial condition at $t=0$.


\subsection{`Inner region'}


We begin by considering the inner region near $x=0$. The PDE is invariant under scaling $t$ with $\varepsilon$---real, positive, and thought of as being small---and $x$  with $\sqrt{\varepsilon}$, while the initial condition and the boundary condition at $x = 0$ (but not that at $x = 1$) are invariant under this scaling. We deduce that, for small $t$, there is a region $x/\sqrt{t}=O(1)$ in which  $x$ itself is small but the PDE does not become trivial. For small $t$, therefore, we work in this inner region, thereby ignoring the boundary condition at $x=1$ (it is replaced by asymptotic matching) and solve, in the semi-infinite domain $(t,x)\in(0,+\infty)^2$, the problem  
\begin{align}
\label{p_inner}
2\partial_t p^{\mathrm{in}} = \partial_x (\partial_x p^{\mathrm{in}}/p^{\mathrm{in}}), \; p^{\mathrm{in}}(0,x) = 0, \; 
p^{\mathrm{in}}(t,0) = 1.
\end{align}
The inner solution near $x=1$ follows by substitution $x \longrightarrow 1-x$.
Although we have yet to consider the outer region, we anticipate that $p^{\mathrm{in}}(t,x)\longrightarrow 0$ as $x \longrightarrow \infty$.

\medskip
The whole problem \eqref{p_inner} is invariant under the scaling in $t$ and $x$ mentioned above, which suggests the similarity \emph{ansatz} $p^{\mathrm{in}}(t,x) = f(\xi)$, where $\xi \coloneqq x/\sqrt{t}$. This matches the initial and boundary condition in \eqref{p_inner} if we require
$f(0) = 1$ and $\lim_{\xi\rightarrow \infty} f(\xi) = 0$.
Insertion in the PDE in \eqref{p_inner} gives
\begin{align}
\label{f_inner}
- \xi f^\prime = (f^\prime/f)^\prime.
\end{align}

This equation too has scaling (group) invariances, of which the invariance under scaling $x$ with $\lambda$ and $f$ with $1/\lambda^2$ means that setting $f = g/\xi^2$ leads to the homogeneous problem
\begin{align*}
- \xi g^\prime+ 2 g = \xi^2 (g^\prime/g)^\prime + 2.
\end{align*}
Exploiting the homogeneity by using the logarithmic derivative $\xi\mathrm{d}/\mathrm{d}\xi$ leads to an autonomous second-order equation which can be reduced to a first-order equation---and, in fact, a quadrature---by the further substitution  $\xi g^\prime= g H(g)$, in which the first $g$ on the right is for convenience. The result is the separable equation
\begin{align}
\label{H_ode}
g H H^\prime + (1-g)(2-H) = 0,
\end{align}
with $g(\xi)$ subsequently recovered by separating 
$\xi g^\prime = gH(g)$. 
 A graphical analysis---equivalent to a phase-plane---which we defer to later (see \Cref{sec:asympphase}), shows that the solution we require must satisfy the boundary and asymptotic conditions
 $g(0) = 0$,
 $g(\xi)\longrightarrow 1$ as $\xi\longrightarrow \infty$, and $H(0) = 2$, $H(1) = 0$.

\medskip
 Then by separation and integration of \eqref{H_ode}, and using these conditions, we find
  \begin{align}
\label{H_eqn}
H(g) + 2 \log(2-H(g)) = \log g - g + 1 + 2 \log 2.
\end{align}
 With $H$ implicitly given by \eqref{H_eqn}, using $\xi g^\prime= g H(g)$ and $g = \xi^2 f$
   \begin{align}
\label{f_ode}
f^\prime= - f \cdot \bigg(\frac{2-H(\xi^2 f)}{\xi}\bigg), \; f(0) = 1.
\end{align}
Then $p^{\mathrm{in}}(t,x) = f(x/\sqrt{t})$.

\subsection{`Outer region'}

When considering the outer region, with outer solution 
$p^{\mathrm{out}}(t,x)$, we ignore the spatial boundary conditions as the solution changes rapidly near the boundaries, in the manner just described, and represented by $p^{\mathrm{in}}(t,x)$. We note that the far-field behaviour of $p^{\mathrm{in}}(t,x)$  is $t/x^2$, as $f(\xi) \sim 1/\xi^2$ for large $\xi$. This suggests that we look for separable solutions of the form $p^{\mathrm{out}}(t,x) = a(t) q(x)$. Insertion in \eqref{p_pde} gives 
\begin{align}
\dot{a} = 1 , \; 2 q = (q^\prime/q)^\prime,
\end{align}
since the separation constant may be taken equal to $1$ without loss of generality, by rescaling $q$.
 
\medskip
Bearing in mind the initial condition, we take the constant of integration for $a$ to be zero, so that $a(t)=t$. The equation for $q$ can be solved in various ways (for example, it is homogeneous, so one can first find $q^\prime$ as a function of $q$); its general solution is $q(x) = c^2/\sin^2(c (x-x_0))$ for arbitrary constants $c$ and $x_0$. We now determine these by matching.

\medskip 
As an aside, we note that taking the limit $c\longrightarrow 0$ shows that 
$t/x^2$ is an exact solution to \eqref{p_pde}, without the boundary conditions.

\subsection{Asymptotic matching and comparison}
 
We are now in a position to join the inner and outer solutions (in practice, of course, they are found iteratively in parallel). We need to use the standard Van Dyke matching rule
(see, \emph{e.g.}, \cite[Subsection 5.1.5]{hinchperturbation}) in the form
\[
\text{one-term outer(one-term inner) = one-term inner(one-term outer)}
\]
where `one-term' simply means we take the leading-order behaviour in the limits that correspond to transiting from one region to another; thus on the left we take the behaviour of $f(\xi)$ as $\xi\longrightarrow\infty$, and write it in terms of $x$ and $t$, while on the right we take the behaviour of the outer solution as $x\longrightarrow 0$. The method is equivalent to matching in an overlap region $\sqrt{t} \ll x \ll O(1)$.  

\medskip
From $g(\xi)\longrightarrow 1$ as $\xi\longrightarrow \infty$, $f(\xi) \sim 1/\xi^2$ for large $\xi$. In outer variables this is $t/x^2$, and so matching dictates that $p^{\mathrm{out}}(t,x) \sim t/x^2$ for $x \longrightarrow 0$. 
Expanding $p^{\mathrm{out}}(t,x)= t c^2/\sin^2(c(x- x_0))$ for small $x$, we see that we can only match with $t/x^2$ if $x_0=0$. 

\medskip
At  this point, $c$ is not determined. However, the outer solution must have a similar singularity at $x=1$, and this tells us that $\sin c =0$, so $c=\pi$ (other values like $2\pi$ are ruled out because the solution is strictly concave). It is then automatic---by symmetry---that $p^{\mathrm{out}}(t,x)$ has the correct singularity to match with the inner solution near $x = 1$. 

\medskip
Note that the outer solution coincides with the solution derived in \cite{backhoff2023most}. 
We can now deduce the following comparisons.

\begin{Proposition}\label{prop:bounds}
We have, for all $x\in [0,1]$, $t\in [0,T)$,
\begin{align}
\label{ineq:p}
p^{\mathrm{in}}(t,x) &\le p(t,x) \le p^{\mathrm{out}}(t,x), \\
\label{ineq:e}
e^{\mathrm{out}}(t,x)&  \le  e(t,x) \le \;\; e^{\mathrm{in}}(t,x), \\
\label{ineq:sig}
\frac{\sin(\pi x)}{\pi \sqrt{T-t}} 
&\le \sigma^\star(t,x) \le \frac{\min\{x,1-x\}}{\sqrt{T-t}} \bigg(1 + o\bigg(\frac{\sqrt{T-t}}{\min\{x,1-x\}}\bigg)\bigg).
\end{align}

\end{Proposition}

\begin{proof}
We begin by noting that the right-hand inequality of \eqref{ineq:p} and left-hand inequality of \eqref{ineq:sig} are implied by
Proposition \ref{lem:qv}. We proceed with the left-hand inequality of \eqref{ineq:p}.
By construction, $p^{\mathrm{in}}$ solves \eqref{p_inner}. Moreover, $f$ and hence $p^{\mathrm{in}}$ is smooth and from \eqref{f_ode} with $\xi \ge 0$, $H\le 2$,
we have $f'\le 0$, so that $\partial_t p^{\mathrm{in}}\ge 0$.
As moreover $f\le 1$, also
$p^{\mathrm{in}}(t,1)\le 1$ and 
we can apply \Cref{prop:compare} to deduce $p^{\mathrm{in}}\le p$.

\medskip
Then \eqref{ineq:e} follows by integrating $\partial_te(t,x) = \log(p(T-t,x))$ backwards in $t$, and correspondingly for $e^{\mathrm{in}}$ and $e^{\mathrm{out}}$.

\medskip
Lastly, for the right-hand inequality in \eqref{ineq:sig} we use $\xi^2 f(\xi) \longrightarrow 1$ as $\xi \longrightarrow \infty$, so that
\[
p(t,x) \ge p^{\mathrm{in}}(t,x) \ge \frac{x^2}{t} \frac{1}{1 + o\big(\frac{t}{x^\smallfont{2}}\big)}, \; \text{as} \; \frac{t}{x^2} \longrightarrow 0 
\Longrightarrow  \sigma^\star(t,x) \le \frac{x}{\sqrt{T-t}} \bigg(1 + o\bigg(\frac{\sqrt{T-t}}{x}\bigg)\bigg).
\]
By symmetry in $x$,  \eqref{ineq:sig} follows.
\qed
\end{proof}

\medskip
A plot of $e$, $e^{\mathrm{in}}$ and $e^{\mathrm{out}}$ is given in Fig.\ \ref{fig:asympt_e}, left, illustrating the ordering in \eqref{ineq:e}.
The bounds in \eqref{ineq:sig} show that while $\bar \sigma$ from \cite{backhoff2023most}
is dominated by $\sigma^\star$, they share the essentially same inverse square root singularity close to the terminal time.

\subsection{Composite expansion}

Finally, we construct a composite expansion, in the form `inner + outer $-$ common', where `common' means the part of both expansions that is determined by matching (here, $t/x^2$). 
Adding the inner solutions, outer solution, and subtracting the common limit, we finally get
   \begin{align}
\label{composite}
p(t,x) \sim p^{\mathrm{comp}}(t,x)=
f\left(\frac{x}{\sqrt{t}} \right)
- \frac{t}{x^2} + \frac{\pi^2 t}{\sin^2(\pi x)}  + 
f\left(\frac{1-x}{\sqrt{t}} \right) - \frac{t}{(1-x)^2},
\; t \ll 1, \; 0 \leq x \leq 1,
\end{align}
where $f$ satisfies \eqref{f_ode} with $H$ defined implicitly by \eqref{H_eqn}.

\medskip
For the computations, we solve the ODE \eqref{f_ode} with Matlab's built-in {\tt ode45}, which is based on the explicit Runge--Kutta (4,5) formula, see \citeauthor*{dormand1980family} \cite{dormand1980family}. Herein, the function $H$ is found from \eqref{H_eqn}, where the initial guess provided to the iterative {\tt fzero} solver is chosen as
$2-2 \sqrt{g}$.

\begin{figure}[ht!]
\hspace{-0.6 cm}
\includegraphics[width=0.52\columnwidth, height = 0.4\columnwidth]{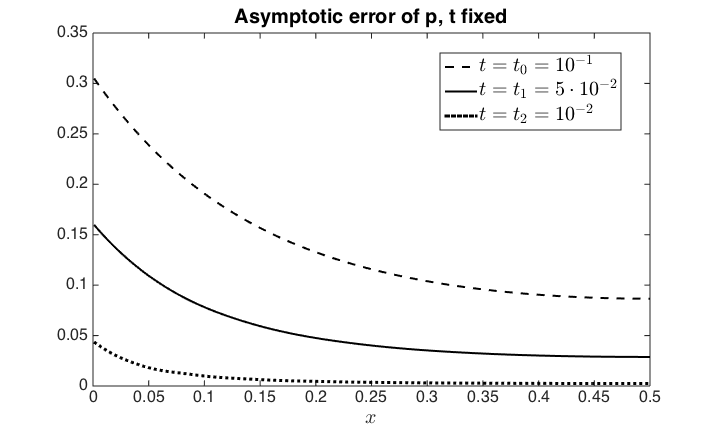}
\hspace{-0.5 cm}
\includegraphics[width=0.52\columnwidth, height = 0.4\columnwidth]{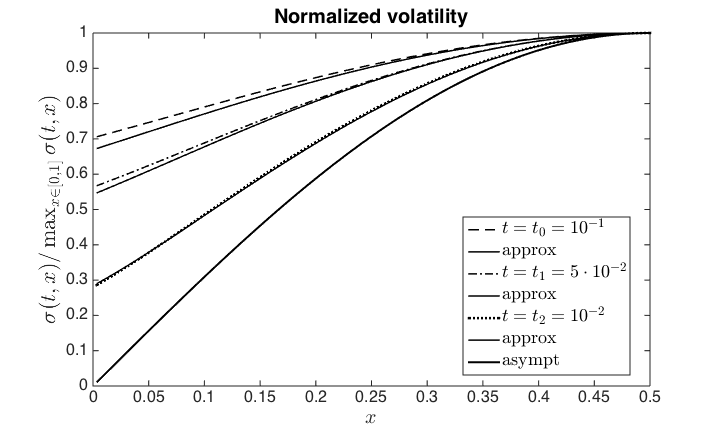}
\caption{\small Left: error between the composite asymptotic solution for $p$ in \eqref{composite} for different $t$, compared to the numerical approximation.
Right: the derived asymptotic solution for $\sigma(t,\cdot) = 1/\sqrt{p(t,\cdot)}$  for different $t$, compared to the numerical approximation, all normalised by their maximum.}
\label{fig:asympt_p}
\end{figure}

The entropy $e$ can now be found by integrating the relationship $2 (\partial_t e)(T-t,x) = \log p(t,x)$ in time.
For the inner region, integrating by parts twice
\begin{align*}
2 e^{\rm in}(T-t,x) - 0 &= - \int_0^t \log p^{\mathrm{in}}(s,x) \, {\rm d}s \\
&= - \int_{x/\sqrt{t}}^\infty \log f(\xi) \frac{2 x^2}{\xi^3}  \, {\rm d}\xi \\
&= x^2  \frac{\log f(\xi)}{\xi^2} \bigg\vert_{x/\sqrt{t}}^\infty +  x^2  \frac{f^\prime(\xi)}{\xi f(\xi)} \bigg\vert_{x/\sqrt{t}}^\infty
- x^2  \int_{x/\sqrt{t}}^\infty \bigg(\frac{f^\prime(\xi)}{f(\xi)}\bigg)^\prime \frac{1}{\xi}  \, {\rm d}\xi  \\
&= -t \log f(x/\sqrt{t}) - x \sqrt{t} \frac{f^\prime(x/\sqrt{t})}{f(x/\sqrt{t})} - x^2 f(x/\sqrt{t}),
\end{align*}
where in the last step we used \eqref{f_inner}. 
%
For the outer region, similarly and more simply,
\begin{align*}
2 e^{\rm out}(T-t,x) = - 2 t \log \pi + t 
- t \log t 
+ 2 t \log(\sin \pi x).
\end{align*}
In the overlap region, $e \sim t - t \log t + 2 t \log(x)$,
so that the composite expansion is
\begin{align}
\label{e_asympt}
e(t,x) \sim e^{\rm comp}(t,x) =  e^{\rm in}(t,x) + 2 t \log(\sin \pi x/ (\pi x))  + 2 t \log(\sin \pi (1-x)/ (\pi (1-x))) + e^{\rm in}(t,1-x).
\end{align}

\begin{figure}[ht!]
\hspace{-0.4 cm}
\includegraphics[width=0.62\columnwidth]{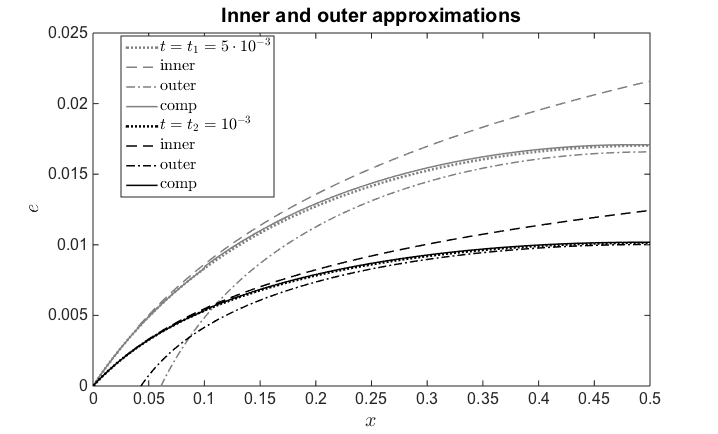}
\hspace{-1 cm}
\includegraphics[width=0.55\columnwidth, trim=2.5cm 0 0 0, clip]{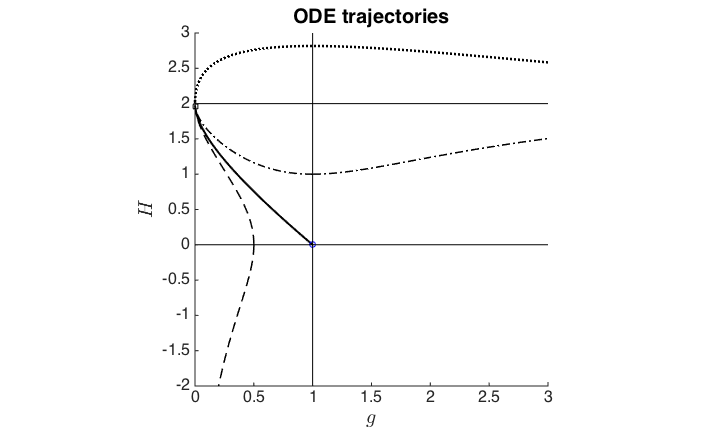}
\caption{\small Left: inner, outer, and composite asymptotic solution for $e$ from \eqref{e_asympt} for different $t$, compared to the numerical approximation.
Right: candidate trajectories of \eqref{H_ode} in the $(g,H)$-plane. The unique path consistent with the boundary conditions is shown as solid.
}
\label{fig:asympt_e}
\end{figure}

In \Cref{fig:asympt_p}, we plot the error in 
$p^{\mathrm{comp}
}(t,x)$, defined as the difference between it and the numerical solution, at a variety of times. The agreement is excellent for small times, and although it appears somewhat large near the boundaries  when $t=10^{-1}$, it is worth noting that taking the limit $x\longrightarrow 0$ in $p^{\mathrm{comp}}(t,x)$ gives the value $1 - \pi^2 t / 3 \approx 1 - 3.29t$, with a very small correction due to the inner and common terms from the boundary $x=1$ (which almost cancel).  This is consistent with the plot. Unfortunately it does not appear easy to construct the second term in the asymptotic expansions.  

\medskip
\Cref{fig:asympt_e} shows the same comparison for the entropy. Being integrated in time, the error is an order of magnitude smaller and the approximation is excellent on the whole domain (here, we expect the error to be $O(t^2)$).\footnote{Note though that the values of $t$ are different between $p$ and $e$. It proved difficult to approximate $p$ numerically to sufficient accuracy for very small $t$ in order to compute the expansion error reliably.}


\subsection{Analysis of the inner-layer ODE}\label{sec:asympphase}

We now look in more detail at the ODE for $H(g)$ that arises in the inner-region analysis. From $f(0) = 1$ it follows that $g(\xi) \sim \xi^2$ as $\xi \longrightarrow 0$. Hence, from \eqref{H_ode}, all relevant trajectories in the $(g,H)$-plane---see \Cref{fig:asympt_e}, right---start from $(0,2)$, which is a node, from which all trajectories emerge tangential to the $H$ axis, with the exception of the trajectory $H=2$  (which corresponds to $g=\xi^2$, $f = 1$, which is a trivial and irrelevant  solution of the original PDE).
We argue that the only possible trajectory is the one that decreases in $H$ and ends in $(1,0)$, which is a saddle with asymptotic directions along $(1, \pm \sqrt{2})$.


\medskip
All trajectories with $g<0$ are irrelevant, as are those that start upwards from $(0,2)$; these latter ones (dotted in \Cref{fig:asympt_e}, right) cannot cross the trajectory $H=2$ and thus $H>2$ on them, from which $\xi g^\prime(\xi)> 2g$, so $(\log g - 2\log \xi)^\prime >0$ and finally $g > c \xi^2$ for some positive $c$, so $f(\xi)>c$ which is inconsistent with the boundary condition at infinity.


 
\medskip
We now consider the trajectories that decrease from $(0,2)$ until they meet the vertical line from $(1,0)$ to $(1,2)$, which they cross  horizontally and thereafter increase, approaching $H=2$ exponentially (dash-dotted in \Cref{fig:asympt_e}, right). Hence there is $c>0$ such that for sufficiently large $g$,
\[
H > 2 - c/g \Longrightarrow  \xi g^\prime/g > 2 - c/g \Longrightarrow \frac{g^\prime}{g-c/2} >   \frac{2}{\xi}.
\]
Integrating, we have $g(\xi) > c/2 + A \xi^2$ for some $A>0$, leading again to a contradiction.

\medskip
Lastly, the trajectories that cross the horizontal line from $(0,0)$ to $(1,0)$ do so vertically, and they are asymptotic to the negative $H$ axis as $g\longrightarrow 0$
(dashed in \Cref{fig:asympt_e}, right). Specifically, as $H \longrightarrow -\infty$ and $g \downarrow 0$, we have $gH^\prime \sim 1$, so that $H \sim \log (g/c_1)$ for some positive $c_1$. Using this in $\xi g^\prime(\xi) = gH(g)$ and integrating gives $\log(c_2\xi) \sim \log\log(g/c_1)$ for some  
$c_2$ which is also positive as $\xi \longrightarrow +\infty$ in this limit. Exponentiating twice gives $g(\xi) \sim c_1 \mathrm{e}^{c_2\xi}$
 as $\xi\longrightarrow\infty$. With its exponential growth, this solution cannot be matched to any outer solution that makes sense in the context of our problem; hence these solutions are inadmissible. We conclude by elimination that our trajectory is, as claimed, that heteroclinic one that joins the two critical points.  
 

\bibliography{bibliography}

\end{document}